\renewcommand{\le}{\leqslant}
\renewcommand{\ge}{\geqslant}
\renewcommand{\leq}{\leqslant}
\renewcommand{\geq}{\geqslant}
\renewcommand{\setminus}{\smallsetminus}
\renewcommand{\gamma}{\upgamma}
\newcommand{\MM}{\mathcal{M}}
\renewcommand{\setminus}{\smallsetminus}
\newcommand{\n}{\{1,\ldots,n\}}
\newcommand{\f}{\varphi}
\newcommand{\e}{\varepsilon}
\newcommand{\R}{\mathbb R}
\newcommand{\1}{\mathbf 1}
\newtheorem{theorem}{Theorem}[section]
\newtheorem{proposition}[theorem]{Proposition}
\theoremstyle{remark}
\newtheorem{conjecture}[theorem]{Conjecture}
\newtheorem{remark}[theorem]{Remark}
\newtheorem{question}[theorem]{Question}
\theoremstyle{definition}
\renewcommand{\subset}{\subseteq}
\newcommand{\C}{\mathbb C}
\newcommand{\E}{\mathbb{ E}}
\newcommand{\eqdef}{\stackrel{\mathrm{def}}{=}}
\def\E{{\mathbb E}}
\def\R{{\mathbb R}}
\def\C{{\mathbb C}}
\def\Pr{{\mathbb P}}
\def\1{{\mathbf 1}}
\def\ud{{\mathrm d}}
\title{Moments of the distance between independent random vectors}
\author{Assaf Naor and Krzysztof Oleszkiewicz}
\date{}
\thanks{A.N. was supported by the Packard Foundation and the Simons Foundation.  The research that is presented here was conducted under the auspices of the Simons Algorithms and Geometry (A\&G) Think Tank. K.O. was partially supported by the National Science Centre, Poland, project number 2012/05/B/ST1/00412.}
\begin{document}

\maketitle

\begin{abstract}
We derive various sharp bounds on moments of the distance between two independent random vectors taking values in a Banach space.
\end{abstract}
\section{Introduction}

Throughout what follows, all Banach spaces are tacitly assumed to be separable. This  assumption removes the need to discuss  measurability side-issues; alternatively one could consider throughout only the special case of finitely-supported random variables, which captures all of the key ideas.  We will also tacitly assume that all Banach spaces are over the complex scalars $\C$. This assumption is convenient for the ensuing proofs, but the main statements (namely, those that do not mention complex scalars explicitly) hold over the real scalars as well, through a standard complexification procedure. All the notation and terminology from Banach space theory that occurs below is basic and standard, as in e.g.~\cite{LT77}.

Our starting point is the following question. What is the smallest $C>0$ such that for every Banach space $(F,\|\cdot\|_{\!F})$ and every  two independent $F$-valued integrable random vectors $X,Y\in L_1(F)$ we have
\begin{equation}\label{eq:3 in intro}
\inf_{z\in F}\E\left[\|X-z\|_{\!F}^{\phantom{p}}+\|Y-z\|_{\!F}^{\phantom{p}}\right]\le C\E\left[\|X-Y\|_{\!F}^{\phantom{p}}\right]?
\end{equation}
We will reason that~\eqref{eq:3 in intro} holds with $C=3$, and that $C=3$ is the sharp constant here. More generally, we have the following theorem.
\begin{theorem} \label{general}
Suppose that $p \geq 1$ and $(F, \| \cdot\|_{\!F})$ is a Banach space.  Let $X,Y\in L_p(F)$ be two
independent $F$-valued $p$-integrable random vectors. Then
\begin{equation}\label{eq:p version}
\inf_{z \in F} \E\left[\| X-z\|_{\!F}^{p}+\| Y-z\|_{\! F}^{p} \right] \leq \frac{3^p}{2^{p-1}} \E\left[\| X-Y\|_{\!F}^{p}\right].
\end{equation}
The constant $\frac{3^p}{2^{p-1}}$ in~\eqref{eq:p version} cannot be improved.
\end{theorem}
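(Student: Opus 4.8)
The plan for the upper bound is to produce a single good center by randomizing. Let $X',Y'$ be independent copies of $X,Y$, with $X,X',Y,Y'$ mutually independent, and set $Z=\tfrac12(X'+Y')$. Since $Z$ is independent of $(X,Y)$, the infimum is at most the average over $Z$:
\[
\inf_{z\in F}\E\big[\|X-z\|^p+\|Y-z\|^p\big]\ \le\ \E\big[\|X-Z\|^p+\|Y-Z\|^p\big].
\]
Writing $X-Z=\tfrac12(X-X')+\tfrac12(X-Y')$ and applying the triangle inequality twice,
\[
\|X-Z\|\ \le\ \tfrac12\|X-X'\|+\tfrac12\|X-Y'\|\ \le\ \tfrac12\big(\|X-Y'\|+\|Y'-X'\|\big)+\tfrac12\|X-Y'\|\ =\ \|X-Y'\|+\tfrac12\|X'-Y'\|,
\]
and symmetrically $\|Y-Z\|\le\|Y-X'\|+\tfrac12\|X'-Y'\|$. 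Hence
\[
\|X-Z\|^p+\|Y-Z\|^p\ \le\ \Big(\|X-Y'\|+\tfrac12\|X'-Y'\|\Big)^p+\Big(\|Y-X'\|+\tfrac12\|X'-Y'\|\Big)^p .
\]
Taking expectations and applying Minkowski's inequality in $L_p$ of the underlying probability space, while using that each of $\|X-Y'\|$, $\|X'-Y'\|$, $\|Y-X'\|$ has the same law as $\|X-Y\|$ (all four vectors being independent), bounds each of the two terms by $\big(1+\tfrac12\big)^p\,\E\|X-Y\|^p=(3/2)^p\,\E\|X-Y\|^p$, for a total of $2(3/2)^p=\tfrac{3^p}{2^{p-1}}$ times $\E\|X-Y\|^p$. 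This proves \eqref{eq:p version}.

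For optimality I would construct a sequence of examples whose ratios tend to $\tfrac{3^p}{2^{p-1}}$, tailored so that every inequality above becomes asymptotically sharp. Equality in the two triangle inequalities forces $Y'$ to lie on the metric segment between $X$ and $X'$ (and, symmetrically, $X'$ on the segment between $Y$ and $Y'$) and forces $\|X-Y'\|\approx\|X'-Y'\|$; concretely, the supports of the laws of $X$ and $Y$ should be almost equidistant at some scale $\delta$, with $X$ and its copy at distance $\approx 2\delta$ and the copies of $Y$ sitting almost midway between them. Equality in the averaging step forces $\tfrac12(X'+Y')$ to be almost a minimizer of $z\mapsto\E[\|X-z\|^p+\|Y-z\|^p]$, so the configuration must in addition possess no genuinely good common center. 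No single bounded configuration can meet all these demands at once, so the extremizer must spread over infinitely many scales: I would use a self-similar family in a tree-like (hence $\ell_1$-embeddable) space, in which each ``point'' of the $n$-th example is replaced by a rescaled copy of the whole example to form the $(n{+}1)$-st, arranged so that the optimal center of the mixture is displaced outward by the same factor at every scale; the resulting ratios then obey a recursion whose fixed point is $\tfrac{3^p}{2^{p-1}}$.

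The main obstacle is precisely this construction. The three competing requirements---near-equidistance of the two supports, near-betweenness of the copies, and absence of a good center---pull against one another (indeed on $\R$ one cannot exceed a strictly smaller constant), and the delicate point is to balance them across scales and then to verify rigorously that the ratio converges to $\tfrac{3^p}{2^{p-1}}$ rather than stalling at something smaller. The case $p=1$ is a helpful guide, since there the only slack in the argument above lies in the averaging step and the two triangle inequalities, which makes the target configuration easier to reverse-engineer.
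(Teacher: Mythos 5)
Your upper bound argument is correct and takes a genuinely different route from the paper. You randomize the center as $Z=\tfrac12(X'+Y')$, apply the triangle inequality twice to get $\|X-Z\|\le\|X-Y'\|+\tfrac12\|X'-Y'\|$, and then use Minkowski's inequality in $L_p$ of the probability space together with the equidistribution of $X-Y'$, $X'-Y'$, $Y-X'$ with $X-Y$. The paper instead uses the \emph{deterministic} center $z=\tfrac12\E[X]+\tfrac12\E[Y]$, writes $X-z=\tfrac32\big(\tfrac23(X-\E[Y])+\tfrac13(\E[Y]-\E[X])\big)$, and invokes the pointwise convexity of $\|\cdot\|_{\!F}^p$ together with Jensen. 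The paper's version is slightly cleaner (one convexity step, no auxiliary independent copies $X',Y'$, and it proves the stronger bound $\boldsymbol{\mathcal{m}}_p(F)\le 3^p/2^{p-1}$ on the ``mixture'' modulus); your version is equally valid and has the virtue of being transparently a triangle-inequality argument that does not need expectations of $X$ and $Y$ to be well-defined elements (it would survive in the quasi-normed regime discussed later in the paper).

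For sharpness you give only a plan, not a proof, and the plan rests on a misconception that would doom it. You propose to realize the extremal constant in a ``tree-like (hence $\ell_1$-embeddable) space.'' This cannot work. Theorem~\ref{thm:Lp case} shows that the optimal constant for $F=L_q$ is \emph{strictly} smaller than $3^p/2^{p-1}$ for every $p,q\in[1,\infty)$; already for $p=1$, Enflo's inequality $\boldsymbol{\mathcal{r}}_1(L_1)=2$ combined with~\eqref{eq:general moduli relations} (and $\boldsymbol{\mathcal{j}}_1(L_1)=1$) yields $\boldsymbol{\mathcal{b}}_1(L_1)=2<3=3^1/2^0$, so no subspace of $L_1$, in particular no metric tree, can exhibit the constant $3$ that your heuristic targets. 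The paper's extremal example lives at the opposite end: it is the hyperplane $\{x\in\C^{2n}:\sum_k x_k=0\}$ inside $\ell_\infty^{2n}$, with $X$ uniform on an $n$-point set $A_n$ and $Y$ uniform on a disjoint $n$-point set $B_n$, arranged so that $\|a-b\|_\infty=2n$ identically on $A_n\times B_n$. A symmetrization over the permutation isometries that leave $A_n$ and $B_n$ invariant reduces the infimum over $z$ to a one-parameter family and yields $\E[\|X-z\|_\infty^p+\|Y-z\|_\infty^p]\ge 2(3n-2)^p$ for all $z$, giving ratio at least $2(\tfrac32-\tfrac1n)^p\to 3^p/2^{p-1}$. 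In short: the hard part of the theorem is the construction you deferred, your proposed direction for it is provably wrong, and the correct construction uses $\ell_\infty$ geometry together with a symmetrization trick quite different from the self-similar/multiscale scheme you sketch.
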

The Banach space $F$ that exhibits this sharpness of~\eqref{eq:p version}   is, of course, a subspace of $\ell_\infty$, but we do not know what is the optimal constant in~\eqref{eq:p version} when $F=\ell_\infty$ itself. More generally, understanding the meaning of the optimal constant in~\eqref{eq:p version} for specific Banach spaces is an interesting question, which we investigate in the rest of the present work for certain special classes of Banach spaces but do not fully resolve.

\subsection{Geometric motivation}Our interest in~\eqref{eq:3 in intro} arose from investigations of~\cite{ANN18} in the context of Riemannian/Alexandrov geometry.  It is well established throughout an extensive geometric literature that a range of useful quadratic distance inequalities for a metric space $(\MM,d_\MM)$ arise if one imposes bounds on its curvature in the sense of Alexandrov. The term ``quadratic'' here indicates that these inequalities involve squares of distances between finite point configurations in $\MM$. A phenomenon that was established in~\cite{ANN18} is that any such quadratic metric inequality that holds for every Alexandrov space of nonnegative curvature becomes valid in any metric space whatsoever if one removes the squaring of the distances, i.e., in essence upon ``linearization'' of the inequality; see~\cite{ANN18} for a precise formulation. This led naturally to the question whether the same phenomenon holds for Hadamard spaces (complete simply connected spaces whose Alexandrov curvature in nonpositive); see~\cite{ANN18} for an extensive discussion as well as the recent negative resolution of this question in~\cite{EMN18}. In the context of a Hadamard space $(\MM,d_\MM)$, the analogue of~\eqref{eq:3 in intro} is that independent finitely-supported $\MM$-valued random variables $X,Y$ satisfy
\begin{equation}\label{eq:Hadamard}
\inf_{z\in \MM}\E\left[d_\MM(X,z)^2+d_\MM(Y,z)^2\right]\le \E\left[d_\MM(X,Y)^2\right].
\end{equation}
See~\cite{ANN18} for a standard derivation of~\eqref{eq:Hadamard}, where $z\in \MM$ is an appropriate ``geometric barycenter,'' namely it is obtained as the minimizer of the expected squared  distance from $X$ to $z$. As explained in~\cite{ANN18}, by using~\eqref{eq:Hadamard} iteratively one can obtain quadratic metric inequalities that hold in any Hadamard space and serve as obstructions for certain geometric embeddings.  The ``linearized'' version of~\eqref{eq:Hadamard}, in the case of Banach spaces and allowing for a loss of a factor $C$, is precisely~\eqref{eq:3 in intro}. So, in the spirit of~\cite{ANN18} it is natural to ask what is the smallest $C$ for which it holds. This is what we address here, leading to analytic questions about Banach spaces that are interesting in their own right from the probabilistic and geometric perspective.  We note that there are questions along these lines that~\cite{ANN18} raises and remain open; see e.g.~\cite[Question~32]{ANN18}.

\subsection{Probabilistic discussion} The inequality which reverses~\eqref{eq:3 in intro} holds trivially as a consequence of the triangle inequality, even when $X$ and $Y$ are not necessarily independent.  Namely, any $X,Y\in L_1(F)$ satisfy
\begin{equation*}
\E\left[\|X-Y\|_{\!F}^{\phantom{p}}\right]\le \inf_{z\in F}\E\left[\|X-z\|_{\!F}^{\phantom{p}}+\|Y-z\|_{\!F}^{\phantom{p}}\right].
\end{equation*}
So, the above discussion is about the extent to which this use of the triangle inequality can be reversed.

Since the upper bound that we seek is in terms of the distance in $L_p(F)$ between independent copies of $X$ and $Y$, this can be further used to control from above expressions such as
$\E[\| X-Y\|_{\!F}^{p}]$ for $X$ and $Y$ not necessarily independent in terms of $\E[\| X'-Y'\|_{\!F}^{p}]$, where $X'$ and $Y'$ are independent, $X'$ has the same distribution as $X$, and $Y'$ has the same distribution as $Y$.

In order to analyse the inequality~\eqref{eq:p version} in a specific Banach space $(F,\|\cdot\|_F)$, we consider the following geometric moduli.  Given $p\ge 1$ let $\boldsymbol{\mathcal{b}}_p(F,\|\cdot\|_F)$, or simply $\boldsymbol{\mathcal{b}}_p(F)$ if the norm is clear from the context, be the infimum over those $\boldsymbol{\mathcal{m}}>0$ such that every  independent $F$-valued random variables $X,Y\in L_p(F)$ satisfy
\begin{equation}\label{eq:def b}
\inf_{z \in F} \E\left[\| X-z\|_{\!F}^{p}+\E\| Y-z\|_{\! F}^{p} \right] \leq \boldsymbol{\boldsymbol{\mathcal{b}}} \E\left[\| X-Y\|_{\!F}^{p}\right].
\end{equation}
Thus, $\boldsymbol{\mathcal{b}}_p(F)$ is precisely the best possible constant in the $L_p(F)$-analogue of the aforementioned  barycentric inequality~\eqref{eq:Hadamard}. The use of the letter ``$\boldsymbol{\mathcal{b}}$'' in this notation is in reference to the word ``barycentric.'' Theorem~\ref{general} asserts that $\boldsymbol{\mathcal{b}}_p(F)\le 3^p/2^{p-1}$, and that this bound cannot be improved in general.

Let  $\boldsymbol{\mathcal{m}}_p(F,\|\cdot\|_F)> 0$, or simply $\boldsymbol{\mathcal{m}}_p(F)$ if the norm is clear from the context, be the infimum over those $\boldsymbol{\mathcal{b}}>0$ such that every  independent $F$-valued random variables $X,Y\in L_p(F)$ satisfy
\begin{equation}\label{eq:def m}
\E\left[\bigg\| X-\frac12\E[X]-\frac12\E[Y]\bigg\|_{\!F}^{p}+\bigg\| Y-\frac12\E[X]-\frac12\E[Y]\bigg\|_{\!F}^{p} \right]\le \boldsymbol{\mathcal{m}} \E\left[\| X-Y\|_{\!F}^{p}\right].
\end{equation}
The use of the letter ``$\boldsymbol{\mathcal{m}}$'' in this notation is in reference to the word ``mixture,'' since the left-hand side of~\eqref{eq:def m} is equal to $2\E[\|Z-\E[Z]\|_F^p]$, where $Z\in L_p(F)$ distributed according to the mixture of the laws of $X$ and $Y$, namely $X$ is the $F$-valued random vector such that for every Borel set
$A \subset F$,
\begin{equation}\label{eq:def mixture}
\Pr[Z \in A]=\frac12\Pr[X \in A]+\frac12\Pr[Y \in A]
\end{equation}
Obviously $\boldsymbol{\mathcal{b}}_p(F)\le \boldsymbol{\mathcal{m}}_p(F)$, because~\eqref{eq:def m} corresponds to choosing $z= \frac12\E[X]+\frac12\E[Y]\in F$ in~\eqref{eq:def b}.

While we sometimes bound $\boldsymbol{\mathcal{m}}_p(F)$ directly, it is beneficial to refine the considerations through the study of two further moduli that are natural in their own right and, as we shall see later, their use can lead to better bounds. Firstly, let $\boldsymbol{\mathcal{r}}_p(F,\|\cdot\|_F)$, or simply $\boldsymbol{\mathcal{r}}_p(F)$ if the norm is clear from the context, be the infimum over those $\boldsymbol{\mathcal{r}}>0$ such that every  independent $F$-valued random variables $X,Y\in L_p(F)$ satisfy
\begin{equation}\label{eq:roundness}
 \E\left[\|X-X'\|_{\!F}^p\right]+ \E\left[\|Y-Y'\|_{\!F}^p\right]\le \boldsymbol{\mathcal{r}} \E\left[\|X-Y\|_{\!F}^p\right],
\end{equation}
where $X',Y'$ are independent copies of $X$ and $Y'$, respectively. The use of the letter ``$\boldsymbol{\mathcal{r}}$'' in this notation is in reference to the word ``roundness,'' as we shall next explain.

Observe also that~\eqref{eq:roundness} is a purely metric condition, i.e., it involves only distances between points. So, it makes sense to investigate~\eqref{eq:roundness} in any metric space $(\MM,d_\MM)$, namely to study the inequality
\begin{equation}\label{eq:roundness-metric}
 \E\left[d_\MM(X,X')^p\right]+ \E\left[d_\MM(Y,Y')^p\right]\le \boldsymbol{\boldsymbol{\mathcal{r}}} \E\left[d_\MM(X,Y)^p\right].
\end{equation}
One requires~\eqref{eq:roundness-metric} to hold for $\MM$-valued independent  random variables $X,X',Y,Y'$ (say, finitely-supported, to avoid measurability assumptions) such that each of the pairs  $X,X'$ and $Y,Y'$ is identically distributed.

To the best of our knowledge, condition~\eqref{eq:roundness-metric} was first studied systematically by Enflo~\cite{Enf69}, who defined a metric space $(\MM,d_\MM)$ to have {\em generalized roundness} $p$ it it satisfies~\eqref{eq:roundness-metric} with $\boldsymbol{\mathcal{r}}=2$. He proved that $L_p$ has generalized roundness $p$ for $p\in [1,2]$, and ingeniously used this notion to answer an old question of Smirnov. See~\cite{DGLY02} for a relatively recent example of substantial  impact of Enflo's approach. By combining~\cite{LTW97} with~\cite{Sch38}, a metric space $(\MM,d_\MM)$ has generalized roundness $p$ if and only if $(\MM,d_\MM^{p/2})$  embeds isometrically into a Hilbert space. The case $\boldsymbol{\mathcal{r}}>1$ of~\eqref{eq:roundness-metric} arose in~\cite{BLMN05} in the context of metric embeddings.

The final geometric modulus that we consider here is a quantity  $\boldsymbol{\mathcal{j}}_p(F,\|\cdot\|_F)$, or simply $\boldsymbol{\mathcal{j}}_p(F)$ if the norm is clear from the context, that is defined to be  the infimum over those $\boldsymbol{\mathcal{j}}\ge 1$ such that every  independent and identically distributed $F$-valued random variables $Z,Z'\in L_p(F)$ satisfy
\begin{equation}\label{eq:variance}
 \boldsymbol{\mathcal{j}}\E\left[\|Z-\E[Z]\|_{\!F}^p\right]\le  \E\left[\|Z-Z'\|_{\!F}^p\right],
\end{equation}
Note that~\eqref{eq:variance} holds with $\boldsymbol{\mathcal{j}} =1$ by Jensen's inequality, so we are asking here for an improvement of (this use of) Jensen's inequality by a definite factor; the letter ``$\boldsymbol{\mathcal{j}}$'' in this notation is in reference to ``Jensen.''

We have the following general bounds, which hold for every Banach space $(F,\|\cdot\|_F)$ and every $p\ge 1$.
\begin{equation}\label{eq:general moduli relations}
\boldsymbol{\mathcal{b}}_p(F)\le \boldsymbol{\mathcal{m}}_p(F)\le \frac{2+\boldsymbol{\mathcal{r}}_p(F)}{2\boldsymbol{\mathcal{j}}_p(F)}.
\end{equation}
Indeed, we already observed the first inequality in~\eqref{eq:general moduli relations}, and the second inequality in~\eqref{eq:general moduli relations} is justified by taking independent random variables $X,Y\in L_p(F)$, considering their  mixture $Z\in L_p(F)$ as defined in~\eqref{eq:def mixture}, letting $X',Y',Z'$ be independent copies of $X,Y,Z$, respectively, and proceeding as follows.
\begin{multline*}
\E\left[\bigg\| X-\frac12\E[X]-\frac12\E[Y]\bigg\|_{\!F}^{p}+\bigg\| Y-\frac12\E[X]-\frac12\E[Y]\bigg\|_{\!F}^{p} \right]\stackrel{\eqref{eq:def mixture}}{=}2\E\left[\|Z-\E[Z]\|_{\!F}^p\right]\stackrel{\eqref{eq:variance}}{\le} \frac{2}{\boldsymbol{\mathcal{j}}_p(F)}\E\left[\|Z-Z'\|_{\!F}^p\right]\\
\stackrel{\eqref{eq:def mixture}}{=}\frac{2}{\boldsymbol{\mathcal{j}}_p(F)}\left(\frac12 \E\left[\|X-Y\|_{\!F}^p\right]+\frac14  \E\left[\|X-X'\|_{\!F}^p\right]+ \frac14\E\left[\|Y-Y'\|_{\!F}^p\right]\right)\le \frac{2}{\boldsymbol{\mathcal{j}}_p(F)}\left(\frac12 +\frac14\boldsymbol{\mathcal{r}}_p(F)\right)\E\left[\|X-Y\|_{\!F}^p\right].
\end{multline*}
Recalling the definition~\eqref{eq:def m} of $\boldsymbol{\mathcal{m}}_p(F)$, this implies~\eqref{eq:general moduli relations}.

Here we prove the following bounds on $\boldsymbol{\mathcal{b}}_p(L_q),\boldsymbol{\mathcal{m}}_p(L_q),\boldsymbol{\mathcal{r}}_p(L_q),\boldsymbol{\mathcal{j}}_p(L_q)$ for $p,q\in [1,\infty)$.

\begin{theorem}\label{thm:Lp case} For every $p,q\in [1,\infty)$ we have $\boldsymbol{\mathcal{j}}_p(L_q)= 2^{c(p,q)}$, where
\begin{equation}\label{eq:df cpq}
c(p,q)\eqdef \min\left\{1,p-1,\frac{p}{q},\frac{p(q-1)}{q}\right\}= \left\{\begin{array}{ll} p-1& \mathrm{if}\ 1\le p\le q\le 2\ \mathrm{or}\ 1\le p\le \frac{q}{q-1}\le 2,\\
\frac{p(q-1)}{q} &\mathrm{if}\ q\le p\le \frac{q}{q-1},\\
\frac{p}{q} &\mathrm{if}\ \frac{q}{q-1}\le p\le q,\\
1& \mathrm{if}\ p\ge \frac{q}{q-1}\ge 2\ \mathrm{or}\ p\ge q\ge 2.\end{array}\right.
\end{equation}
We also have $\boldsymbol{\mathcal{r}}_p(L_q)\le 2^{C(p,q)}$, where
\begin{equation}\label{eq:df Cpq}
C(p,q)\eqdef \left\{\begin{array}{ll} p-1& \mathrm{if}\ \frac{p}{p-1}\le q\le p,\\
\frac{p(q-2)}{q}+1 &\mathrm{if}\  \frac{q}{q-1}\le p\le q,\\
2-\frac{p}{q} &\mathrm{if}\ q\ge 2\  \mathrm{and}\  1\le p\le \frac{q}{q-1},\\
\frac{p}{q} &\mathrm{if}\ q\le 2\  \mathrm{and}\  q\le p\le \frac{q}{q-1},\\
1& \mathrm{if}\ 1\le p\le q\le 2.\end{array}\right.
\end{equation}
In fact, if $\frac{p}{p-1}\le q\le p$, then $\boldsymbol{\mathcal{r}}_p(L_q)=2^{p-1}$, if  $\frac{q}{q-1}\le p\le q$, then $\boldsymbol{\mathcal{r}}_p(L_q)=2^{\frac{p(q-2)}{q}+1 }$, and $\boldsymbol{\mathcal{r}}_p(L_q)=2$ if $1\le p\le q\le 2$. Namely, the above bound on $\boldsymbol{\mathcal{r}}_p(L_q)$ is sharp  in the first, second and fifth ranges in~\eqref{eq:df Cpq}.

Furthermore, $\boldsymbol{\mathcal{b}}_p(L_q)=\boldsymbol{\mathcal{m}}_p(L_q)=2^{2-p}$ if $p\le q\le 2$. More generally, we have the bound
\begin{equation}\label{eq:bm bound}
\boldsymbol{\mathcal{b}}_p(L_q)\le\boldsymbol{\mathcal{m}}_p(L_q)\le \min\left\{\frac{3^p}{2^{p-1}}\left(\frac{\sqrt{2}}{3}\right)^{2c(p,q)},\frac{2^{C(p,q)}+2}{2^{c(p,q)+1}}\right\}.
\end{equation}
\end{theorem}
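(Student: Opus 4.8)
The plan is to treat the four moduli in the order $\boldsymbol{\mathcal{j}}_p(L_q)$, then $\boldsymbol{\mathcal{r}}_p(L_q)$, then $\boldsymbol{\mathcal{b}}_p(L_q)$ and $\boldsymbol{\mathcal{m}}_p(L_q)$, since by~\eqref{eq:general moduli relations} the last two are controlled by the first two. Substituting $\boldsymbol{\mathcal{j}}_p(L_q)=2^{c(p,q)}$ and $\boldsymbol{\mathcal{r}}_p(L_q)\le 2^{C(p,q)}$ into $\boldsymbol{\mathcal{m}}_p(L_q)\le\bigl(2+\boldsymbol{\mathcal{r}}_p(L_q)\bigr)/\bigl(2\,\boldsymbol{\mathcal{j}}_p(L_q)\bigr)$ yields the second term of the minimum in~\eqref{eq:bm bound}, while the first term $\frac{3^p}{2^{p-1}}(\sqrt{2}/3)^{2c(p,q)}$ is obtained by the argument that proves Theorem~\ref{general}, now with the sharp bound $\boldsymbol{\mathcal{j}}_p(L_q)=2^{c(p,q)}$ used in place of the trivial $\boldsymbol{\mathcal{j}}_p\ge1$ (i.e.\ in place of Jensen's inequality) at the relevant step; tracking this constant produces the factor $(\sqrt{2}/3)^{2c(p,q)}$ (a consistency check: at $p=q=2$ this gives $\frac92\cdot\frac29=1$, the correct value of $\boldsymbol{\mathcal{b}}_2(L_2)$). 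The equality $\boldsymbol{\mathcal{b}}_p(L_q)=\boldsymbol{\mathcal{m}}_p(L_q)=2^{2-p}$ for $p\le q\le2$ then follows because in that range~\eqref{eq:bm bound} collapses to $\le 2^{2-p}$ (its second term equals $2^{2-p}$, its first equals $3^{2-p}\ge 2^{2-p}$), and the matching lower bound already holds in the line $\R\subset L_q$: if $X$ and $Y$ are independent symmetric $\{-1,1\}$-valued variables then $\inf_{z\in\R}\E\bigl[|X-z|^p+|Y-z|^p\bigr]=2$ (attained at $z=0=\frac12\E X+\frac12\E Y$) while $\E|X-Y|^p=2^{p-1}$, so $\boldsymbol{\mathcal{b}}_p(L_q)\ge\boldsymbol{\mathcal{m}}_p(L_q)\ge 2^{2-p}$.

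For the upper bounds $\boldsymbol{\mathcal{j}}_p(L_q)\le 2^{c(p,q)}$ and the sharpness of the $\boldsymbol{\mathcal{r}}_p(L_q)$ bound in the first, second and fifth ranges of~\eqref{eq:df Cpq}, I would exhibit explicit near-extremal families of $L_q$-valued random vectors assembled from disjointly supported blocks, so that the relevant norms become computable, and then optimize and pass to a limit. The templates are: a single Bernoulli vector $\pm f$; a single ``rare spike'' $\xi f$ with $\xi$ scalar equal to $\pm a$ with probability $\varepsilon/2$ and to $0$ otherwise, $\varepsilon\to0$; a ``spread'' $\sum_{i=1}^{n}\varepsilon_i f_i$ with the $f_i$ disjointly supported and of equal norm and the $\varepsilon_i$ independent signs, $n\to\infty$; and a ``Poissonian spread of rare spikes'' $\sum_{i=1}^{n}\xi_i f_i$ with the $\xi_i$ independent rare spikes, $n\varepsilon$ tending to a constant $\lambda$ and then $\lambda\to\infty$ (for $q<2$ this is an $L_q$-vector with independent $q$-stable coordinates, and for $q=2$ one with independent Gaussian coordinates). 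A direct computation shows that for a single Bernoulli, a single rare spike, a single spread, and a Poissonian spread the ratio $\E\|Z-Z'\|_q^p\,/\,\E\|Z-\E Z\|_q^p$ tends to $2^{p-1}$, to $2$, to $2^{p(q-1)/q}$, and to $2^{p/q}$ respectively, so each template is extremal for exactly one of the four terms defining $c(p,q)$, which gives $\boldsymbol{\mathcal{j}}_p(L_q)\le 2^{c(p,q)}$. Similarly, taking $X=\pm f$ and $Y\equiv0$ gives $\boldsymbol{\mathcal{r}}_p(L_q)\ge 2^{p-1}$; taking $X$ and $Y$ two independent spreads with mutually disjoint supports gives $\boldsymbol{\mathcal{r}}_p(L_q)\ge 2^{p(q-2)/q+1}$; and taking $X,Y$ identically distributed gives the trivial $\boldsymbol{\mathcal{r}}_p(F)\ge2$, valid in every Banach space. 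This step is purely computational; the only care needed is to check that each template actually lands in the $(p,q)$-regime it is meant to certify.

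The core of the argument is the two inequalities $\boldsymbol{\mathcal{j}}_p(L_q)\ge 2^{c(p,q)}$, i.e.\ $\E\|W-W'\|_q^p\ge 2^{c(p,q)}\E\|W\|_q^p$ for independent, identically distributed, mean-zero $W,W'\in L_q$ (apply this with $W=Z-\E Z$), and $\boldsymbol{\mathcal{r}}_p(L_q)\le 2^{C(p,q)}$. I would first dispose of the range $1\le p\le q\le2$ of the latter, where $C(p,q)=1$: the asserted bound says precisely that $L_q$ has generalized roundness $p$ with constant $2$, and this follows from Schoenberg's theorem~\cite{Sch38} together with~\cite{LTW97} once one knows that $(L_q,\|\cdot\|_q^{p/2})$ embeds isometrically into a Hilbert space; and this holds because $(L_q,\|\cdot\|_q^{q/2})$ does (Schoenberg, since $q\le2$), while raising a metric of negative type to a power in $(0,1]$ preserves negative type and $p/q\le1$. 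For the remaining ranges, and for the $\boldsymbol{\mathcal{j}}$-inequality in all ranges, the strategy is to reduce to a sharp scalar moment inequality and then exploit the $\ell_q$-direct-sum structure of $L_q$. When $p=q$, Fubini's theorem reduces everything to the scalar statement $\E|\xi-\xi'|^p\ge 2^{\min\{1,p-1\}}\E|\xi|^p$ for every mean-zero scalar $\xi$ (applied at $\mu$-a.e.\ coordinate, where $W(\cdot)$ is mean-zero); for $p\in[1,2]$ this is proved by observing that $(x,y)\mapsto|x-y|^p$ is a conditionally negative definite kernel on $\R$ (classically, being the square of the Hilbertian metric $|x-y|^{p/2}$), so that the functional $\nu\mapsto\iint|x-y|^p\,\mathrm d\nu(x)\,\mathrm d\nu(y)$ is concave on the convex set of mean-zero laws with $\int|x|^p\,\mathrm d\nu$ prescribed, hence attains its minimum at an extreme point of that set, which is supported on at most three points, whereupon a finite calculation gives the minimum $2^{p-1}$ (attained at the symmetric two-point law); the case $p\ge2$ (bound $2$) is handled by a short separate argument with the rare-spike family as extremizer. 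To upgrade from $p=q$ to $p\ne q$ — which is what generates the exponents $p/q$ and $p(q-1)/q$ — one approximates $W$ by an $\ell_q^{n}$-valued vector, passes to disjointly supported coordinates, and combines the scalar inequality with Minkowski's integral inequality and the convexity of $t\mapsto t^{p/q}$, applied in the direction dictated by the extremal template for that regime (a spread when $p\ge q$, a Poissonian spread when $p\le q$). The same reduction, now with a sharp three- or four-point inequality in place of the two-point one, treats $\boldsymbol{\mathcal{r}}_p(L_q)$ in the ranges not covered by roundness.

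The step I expect to be the main obstacle is exactly this last one: establishing the sharp constants in the mixed regimes $p\ne q$, where a naive coordinatewise or Fubini argument is lossy and the conditional-negative-definiteness trick (which in the roundness range cleanly collapses matters to a few-point check) is no longer at our disposal, since $\|w-w'\|_q^p$ need not be a negative-type kernel on $L_q$ there. One must interleave the scalar inequality with the $\ell_q$-sum structure in precisely the proportion seen in the extremal example, and then verify by a genuine — if elementary — calculus computation that no three- or four-point configuration beats the conjectured extremizer. Obtaining the constant exactly, rather than up to an absolute factor, in these ranges — and in particular proving the asserted sharpness of $\boldsymbol{\mathcal{r}}_p(L_q)$ in the first and second ranges of~\eqref{eq:df Cpq} — is where the real work lies.
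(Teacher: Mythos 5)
Your organization mirrors the paper's: derive the $\boldsymbol{\mathcal{m}}_p$ and $\boldsymbol{\mathcal{b}}_p$ bounds from $\boldsymbol{\mathcal{j}}_p$ and $\boldsymbol{\mathcal{r}}_p$ via~\eqref{eq:general moduli relations}; prove the upper bounds on $\boldsymbol{\mathcal{j}}_p(L_q)$ and the lower bounds on $\boldsymbol{\mathcal{r}}_p(L_q)$ by concrete examples; and establish the hard analytic direction ($\boldsymbol{\mathcal{j}}_p(L_q)\ge 2^{c(p,q)}$ and $\boldsymbol{\mathcal{r}}_p(L_q)\le 2^{C(p,q)}$) by some functional-analytic machinery. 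Your library of near-extremal examples (Bernoulli, rare spike, spread of independent signs, ``Poissonian spread'' or disjointly supported Rademachers) is essentially the family the paper uses, and your treatment of the range $1\le p\le q\le 2$ of $\boldsymbol{\mathcal{r}}_p(L_q)$ via Schoenberg and generalized roundness is correct and actually a clean alternative to the paper's route (the paper handles the fifth range by first deriving the case $p=q$ from interpolation plus snowflaking, then using the isometric embedding $L_q\hookrightarrow L_p$ for $1\le p<q\le 2$). But the central analytic step is done very differently. The paper writes $L_q=[L_r,L_2]_{\theta_{\max}}$ and proves Theorem~\ref{thm:interpolation} by establishing the inequalities at $\theta=0$ (the trivial triangle-inequality endpoint) and at $\theta=1$ (an elementary Parseval/orthogonality calculation in the Hilbert space), then invokes complex interpolation; the sharp exponents $p/q$ and $p(q-1)/q$ emerge automatically because $\theta_{\max}$ hits the right values. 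Your proposal avoids interpolation and instead reduces to scalar inequalities via Fubini. This is fine when $p=q$ (the coordinatewise mean-zero reduction is exactly right, and the negative-definiteness/extreme-point argument is a reasonable substitute for the Hilbertian endpoint), but the mechanism for $p\neq q$ — approximate by $\ell_q^n$, ``pass to disjointly supported coordinates,'' and ``interleave the scalar inequality with Minkowski's integral inequality and convexity of $t\mapsto t^{p/q}$'' — is not a proof. A generic mean-zero $L_q$-valued random vector does not decompose into disjointly supported pieces, and there is no indication of how Minkowski can be made to produce the exact exponents rather than lossy variants; you flag this yourself as where the real work lies, and in the paper that work is precisely what interpolation does. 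Relatedly, the snowflake embedding $\mathfrak{s}_{q,Q}:L_q\to L_Q$ for $Q>q$~\cite{MN04,BDK65}, which the paper uses to improve the raw interpolation bound to $C(p,q)$ in its third and fourth ranges, does not appear in your outline, so those ranges of $\boldsymbol{\mathcal{r}}_p(L_q)\le 2^{C(p,q)}$ would not be reached even granting the rest.

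There is also a concrete error in your derivation of the first term of the minimum in~\eqref{eq:bm bound}. You claim it follows from the Theorem~\ref{general} argument by ``using the sharp $\boldsymbol{\mathcal{j}}_p(L_q)=2^{c(p,q)}$ in place of the trivial $\boldsymbol{\mathcal{j}}_p\ge 1$.'' But the Jensen step in the proof of Theorem~\ref{general} is the conditional-expectation bound $\E\bigl[\|X-\E[Y]\|_{\!F}^p\bigr]=\E\bigl[\|\E_Y[X-Y]\|_{\!F}^p\bigr]\le\E\bigl[\|X-Y\|_{\!F}^p\bigr]$, applied with $X$ held fixed and $Y$ averaged; this is a different inequality from~\eqref{eq:variance} — it involves the non-identically-distributed pair $(X,Y)$ rather than an i.i.d.\ pair $(Z,Z')$, and is not improvable by $\boldsymbol{\mathcal{j}}_p$. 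The paper instead obtains the first term of the minimum as the direct consequence of the separate interpolation inequality~\eqref{eq:two measures-2} in Theorem~\ref{thm:interpolation}, i.e.\ by interpolating the crude $\theta=0$ constant $3^p/2^{p-1}$ against the sharp Hilbertian $\theta=1$ constant $1$, which is why the exponent on $\sqrt{2}/3$ is $p\theta_{\max}=2c(p,q)$. Your consistency check at $p=q=2$ happens to pass because that is the Hilbertian endpoint where every route agrees, but the proposed derivation would not produce the stated constant for general $(p,q)$.
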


The upper bound on $\boldsymbol{\mathcal{b}}_p(L_q)$ in~\eqref{eq:bm bound} improves over~\eqref{eq:p version} when $F=L_q$ for all values of $p,q\in [1,\infty)$.  It would be interesting to find the exact value of $\boldsymbol{\mathcal{b}}_p(L_q)$ in the entire range $p,q\in [1,\infty)$. Note that the second quantity in the minimum in the right hand side of~\eqref{eq:bm bound} corresponds to using~\eqref{eq:general moduli relations} together with the bounds on $\boldsymbol{\mathcal{j}}_p(L_q)$ and $\boldsymbol{\mathcal{r}}_p(L_q)$ that Theorem~\ref{thm:Lp case}  provides; when, say, $p=q$, this quantity is smaller than the first quantity in the minimum in the right hand side of~\eqref{eq:bm bound} if and only if $1\le p< 3$.

Theorem\ref{thm:Lp case} states that the constant $C(p,q)$  is sharp in the first, second and fifth ranges in~\eqref{eq:df Cpq}. The following conjecture formulates what we expect to be the sharp values of $\boldsymbol{\mathcal{r}}_p(L_q)$ for all $p,q\in [1,\infty)$.

\begin{conjecture}\label{conj:opt} For all $p,q\in [1,\infty)$ we have $\boldsymbol{\mathcal{r}}_p(L_q)= 2^{C_{\mathrm{opt}}(p,q)}$, where
\begin{equation}\label{eq:df cpq opt}
C_{\mathrm{opt}}(p,q)\eqdef \max\left\{1,p-1,\frac{p(q-2)}{q}+1\right\}=\left\{\begin{array}{ll} p-1& \mathrm{if}\ p\ge 2\  \mathrm{and}\ 1\le q\le p,\\
\frac{p(q-2)}{q}+1 &\mathrm{if}\ q\ge 2\  \mathrm{and}\ 1\le p\le q,\\
1& \mathrm{if}\ p,q\in [1,2].\end{array}\right.
\end{equation}
\end{conjecture}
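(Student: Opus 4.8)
I would split the conjecture into the matching lower and upper estimates for $\boldsymbol{\mathcal{r}}_p(L_q)$, noting first that a substantial part is already in hand: comparing~\eqref{eq:df Cpq} with~\eqref{eq:df cpq opt} one checks that $C(p,q)=C_{\mathrm{opt}}(p,q)$ throughout the first, second and fifth ranges of~\eqref{eq:df Cpq}, so there Theorem~\ref{thm:Lp case} gives the conjecture outright. What remains is, on the one hand, to sharpen the bound $\boldsymbol{\mathcal{r}}_p(L_q)\le 2^{C(p,q)}$ of Theorem~\ref{thm:Lp case} down to $2^{C_{\mathrm{opt}}(p,q)}$ in the ranges where $C(p,q)>C_{\mathrm{opt}}(p,q)$ (chiefly the third range of~\eqref{eq:df Cpq}), and, on the other hand, to exhibit matching examples of independent $L_q$-valued random variables.

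For the lower bound I would argue term by term against the three quantities in $\max\{1,p-1,\frac{p(q-2)}{q}+1\}$. The estimate $\boldsymbol{\mathcal{r}}_p(L_q)\ge 2$ comes from taking $X,Y$ i.i.d.\ and uniform on a two-point set $\{0,v\}$, for which each of the three expectations in~\eqref{eq:roundness} equals $\frac12\|v\|_q^p$; the estimate $\boldsymbol{\mathcal{r}}_p(L_q)\ge 2^{p-1}$ comes from a one-dimensional example, $Y\equiv 0$ and $X$ uniform on $\{-v,v\}$, for which $\E\|X-X'\|_q^p=2^{p-1}\|v\|_q^p$ while $\E\|X-Y\|_q^p=\|v\|_q^p$. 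The genuinely $L_q$-specific estimate $\boldsymbol{\mathcal{r}}_p(L_q)\ge 2^{\frac{p(q-2)}{q}+1}$, which is the relevant constraint in the regime $q\ge 2$, $p\le q$, is where one should adapt the extremal configuration that realizes equality in the second range of~\eqref{eq:df Cpq}: a high-dimensional product distribution built from independent symmetric coordinates with a carefully tuned profile of magnitudes---in general with $Y$ nontrivial as well, since when $q>3$ the trivial bound $2^p$ on the ratio achievable with $Y\equiv 0$ can be strictly below $2^{\frac{p(q-2)}{q}+1}$---chosen so that the law of large numbers makes $\|X-X'\|_q^q$ and $\|Y-Y'\|_q^q$ concentrate and the ratio in~\eqref{eq:roundness} converge to $2^{\frac{p(q-2)}{q}+1}$. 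I would then check that this computation remains valid throughout $q\ge 2$, $1\le p\le q$, and not only for $\frac{q}{q-1}\le p\le q$.

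The upper bound is where the real difficulty lies. One clean reduction is available at the bottom of the range: by the discussion preceding the conjecture, $\boldsymbol{\mathcal{r}}_p(L_q)\le 2$ is exactly the statement that $L_q$ has generalized roundness $p$, equivalently---combining~\cite{LTW97} with~\cite{Sch38}---that $(L_q,\|\cdot\|_q^{p/2})$ embeds isometrically into a Hilbert space; so the prediction $C_{\mathrm{opt}}(p,q)=1$ is governed by the classical description of the exponents $p$ for which $L_q$ has $p$-negative type, which already recovers the conjecture on the portion $p\le q\le 2$ (the fifth range of~\eqref{eq:df Cpq}). Above this, for fixed $q\ge 2$ the function $p\mapsto C_{\mathrm{opt}}(p,q)$ is piecewise affine with a single breakpoint, at $p=q$, where Theorem~\ref{thm:Lp case} is already sharp; I would try to recover the intermediate bounds either by a vector-valued complex-interpolation argument on the scale of $L_p(L_q)$-norms linking the sharp range $\frac{q}{q-1}\le p\le q$ to the remaining $p<\frac{q}{q-1}$, or---more likely to give the exact constant---by isolating and proving the sharp form of the $L_q$ moment inequality underlying~\eqref{eq:df Cpq}, namely the Clarkson/parallelogram-type comparison of $\E\|X-Y\|_q^p$ with $\E\|X\|_q^p+\E\|Y\|_q^p$.

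The step I expect to be the main obstacle is exactly this last one. An upper bound on $\boldsymbol{\mathcal{r}}_p(L_q)$ is a functional inequality that must hold for \emph{all} pairs of independent distributions, and the route that produces~\eqref{eq:df Cpq} bounds $\E\|X-Y\|_q^p$ from below using convexity together with type/cotype information for $L_q$, losing precisely the gap $C(p,q)-C_{\mathrm{opt}}(p,q)$. Closing that gap appears to require the optimal constant in a scalar (or low-dimensional) two-point inequality, integrated against an arbitrary product measure, and in passing from the two-point inequality to general distributions one must genuinely exploit independence rather than mere exchangeability---type and cotype alone do not force the sharp constant. I would therefore concentrate the effort on finding and proving that sharp underlying inequality; with it in hand, the value of $\boldsymbol{\mathcal{r}}_p(L_q)$ and, via~\eqref{eq:general moduli relations}, the attendant improvements to the bounds on $\boldsymbol{\mathcal{m}}_p(L_q)$ and $\boldsymbol{\mathcal{b}}_p(L_q)$ should follow comparatively mechanically.
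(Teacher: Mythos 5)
This statement is a \emph{conjecture} in the paper, not a theorem: the authors say immediately afterwards that they prove only the lower bound $\boldsymbol{\mathcal{r}}_p(L_q)\ge 2^{C_{\mathrm{opt}}(p,q)}$, and that Conjecture~\ref{conj:opt} amounts to improving the upper bound $\boldsymbol{\mathcal{r}}_p(L_q)\le 2^{C(p,q)}$ of Theorem~\ref{thm:Lp case} in the \emph{third and fourth} ranges of~\eqref{eq:df Cpq} (you write ``chiefly the third,'' but the fourth range --- $q\le 2$ and $q\le p\le \tfrac{q}{q-1}$, where $C(p,q)=\tfrac{p}{q}>\max\{1,p-1\}=C_{\mathrm{opt}}(p,q)$ in the interior --- has a gap as well). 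Your proposal correctly reconstructs this structure: you recognize that the first, second and fifth ranges are already settled by Theorem~\ref{thm:Lp case}, that the lower bound is what can actually be established, and that the upper bound in the remaining ranges is the open obstacle. You rightly stop short of claiming a full argument, and indeed none is available; so the ``proof'' is, necessarily, incomplete.

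On the part that can be compared, namely the lower bound $\boldsymbol{\mathcal{r}}_p(L_q)\ge 2^{C_{\mathrm{opt}}(p,q)}$: your constructions for the two easy terms ($\ge 2$ from $X,Y$ i.i.d.\ on a two-point set, $\ge 2^{p-1}$ from $Y\equiv 0$ and $X$ uniform on $\{-v,v\}$) are fine, though the paper instead derives $\boldsymbol{\mathcal{r}}_p(F)\ge\boldsymbol{\mathcal{r}}_p(L_p)\ge 2^{p-1}$ for every Banach space $F$ by integrating a scalar inequality. For the genuinely $L_q$-specific term $2^{1+p(q-2)/q}$, your sketch (a high-dimensional product distribution with tuned magnitudes, relying on concentration) is in the right spirit but more elaborate than the paper's construction, which needs no law-of-large-numbers step: take $X$ uniform over $n$ Rademacher functions $r_1,\dots,r_n\in L_q$, $Y$ uniform over $n$ further Rademachers $\rho_1,\dots,\rho_n$ with supports disjoint from the $r_i$, so that $\|X-Y\|_q^p\equiv 2^{p/q}$ deterministically while $\E[\|X-X'\|_q^p]=\E[\|Y-Y'\|_q^p]=\tfrac{n-1}{n}(2^{q-1})^{p/q}$, and let $n\to\infty$. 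This works for all $p,q\in[1,\infty)$, so no separate argument is needed to cover the full range $1\le p\le q$. For the upper bound, your diagnosis that the interpolation/snowflake machinery (which produces $C(p,q)$) cannot close the gap, and that what is missing is a sharp two-point inequality exploiting genuine independence rather than type/cotype alone, is a plausible assessment --- but it is a plan rather than a proof, consistent with the statement's open status.
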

We will prove later that $\boldsymbol{\mathcal{r}}_p(L_q)\ge  2^{C_{\mathrm{opt}}(p,q)}$, so Conjecture~\eqref{conj:opt} is about improving our upper bounds on $\boldsymbol{\mathcal{r}}_p(L_q)$ in the remaining third and fourth ranges that appear in~\eqref{eq:df Cpq}.

\begin{question} Below we will obtain improvements over~\eqref{eq:p version} for other spaces besides $\{L_q:\ q\in [1,\infty)\}$, including e.g.~the Schatten--von Neumann trace classes (see e.g.~\cite{Sim79}) $\{\mathsf{S}_q:\ q\in (1,\infty)\}$. However, parts of Theorem~\ref{thm:Lp case} rely on ``commutative'' properties of $L_q$ which are not valid for $\mathsf{S}_q$, thus leading to even better bounds in the commutative setting. It would be especially  interesting to obtain sharp bounds in noncommutative probabilistic inequalities such as the roundness inequality~\eqref{eq:roundness} when $F=\mathsf{S}_q$. In particular, we ask what is the value of $\boldsymbol{\mathcal{r}}_1(\mathsf{S}_1)$? At present, we know (as was already shown by Enflo~\cite{Enf69}) that $\boldsymbol{\mathcal{r}}_1(L_1)=2$ while the only bound that we have for $\mathsf{S}_1$  is $\boldsymbol{\mathcal{r}}_1(\mathsf{S}_1)\le 4$.  Note that $4$ is a trivial upper bound here, which holds for every Banach space. Interestingly, it follows from~\cite{BRS17} that $\boldsymbol{\mathcal{r}}_1(\mathsf{S}_1)\ge 2\sqrt{2}$, as explained in Remark~\ref{rem:schatten} below. So, there is a genuine difference between the commutative and noncommutative settings of  $L_1$ and $\mathsf{S}_1$, respectively. As a more modest question, is $\boldsymbol{\mathcal{r}}_1(\mathsf{S}_1)$ strictly less than $4$?

\end{question}

\subsection{Complex interpolation}
We will use basic terminology, notation and results of  complex interpolation of Banach spaces; the relevant background appears in~\cite{Cal64,BL76}. Theorem~\ref{thm:Lp case} is a special case of the following more general result about interpolation spaces. As such, it applies also to random variables that take values in certain spaces other than $L_q$, including, for examples, Schatten--von Neumann trace classes (see e.g.~\cite{Sim79}) and, by an extrapolation theorem of Pisier\cite{Pis79}, Banach lattices of nontrivial type.

\begin{theorem}\label{cor:differences} Fix $\theta\in [0,1]$ and $\frac{2}{2-\theta}\le p\le \frac{2}{\theta}$. Let $(F,\|\cdot\|_{\!F}), (H,\|\cdot\|_{\!H})$ be a compatible pair of Banach spaces such that $(H,\|\cdot\|_{\!H})$ is a Hilbert space.  Then the following estimates hold true.
\begin{equation}\label{eq:rj interpolation}
\boldsymbol{\mathcal{r}}_p([F,H]_\theta)\le 2^{1+(1-\theta)p}\qquad\mathrm{and}\qquad  \mathcal{j}_p([F,H]_\theta)\ge 2^{\frac{\theta p}{2}}.
\end{equation}
Additionally, we have
\begin{equation}\label{eq:mb interpolation}
\boldsymbol{\mathcal{b}}_p([F,H]_\theta)\le \boldsymbol{\mathcal{m}}_p([F,H]_\theta)\le \min\left\{\frac{3^p}{2^{p-1}}\left(\frac{\sqrt{2}}{3}\right)^{p\theta},\frac{1+2^{(1-\theta)p}}{2^{\frac{\theta p}{2}}}\right\}=\left\{\begin{array}{ll}\frac{3^p}{2^{p-1}}\left(\frac{\sqrt{2}}{3}\right)^{p\theta}&\mathrm{if\ } \frac{1}{1-\theta}\le p\le \frac{2}{\theta},\\ \frac{1+2^{(1-\theta)p}}{2^{\frac{\theta p}{2}}}&\mathrm{if\ } \frac{2}{2-\theta}\le p\le \frac{1}{1-\theta}.
\end{array}\right.
\end{equation}
(Note that if the first range of values of $p$ in the right hand side of~\eqref{eq:mb interpolation} is nonempty, then necessarily $\theta\le \frac23$.)
\end{theorem}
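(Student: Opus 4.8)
The plan is to prove Theorem~\ref{cor:differences} by first establishing the two estimates in~\eqref{eq:rj interpolation} via complex interpolation, and then deducing~\eqref{eq:mb interpolation} from~\eqref{eq:rj interpolation} by invoking the two upper bounds that have already been recorded: the triangle-inequality based bound $\boldsymbol{\mathcal{b}}_p\le\boldsymbol{\mathcal{m}}_p\le \frac{3^p}{2^{p-1}}(\sqrt2/3)^{2c}$ implicit in Theorem~\ref{general} together with the twisting factor, and the bound $\boldsymbol{\mathcal{m}}_p(F)\le\frac{2+\boldsymbol{\mathcal{r}}_p(F)}{2\boldsymbol{\mathcal{j}}_p(F)}$ from~\eqref{eq:general moduli relations}. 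So the heart of the matter is the interpolation inequalities.

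For the roundness bound, the idea is that both $\boldsymbol{\mathcal{r}}_p$ and $\boldsymbol{\mathcal{j}}_p$ are governed by inequalities of the shape ``a weighted sum of $p$-th moments of various linear combinations of $X,X',Y,Y'$ is $\le$ (constant) times $\E\|X-Y\|^p$,'' and such inequalities interpolate. Concretely, in $F$ one has the trivial roundness bound $\boldsymbol{\mathcal{r}}_p(F)\le 4$ (i.e.\ $\E\|X-X'\|^p+\E\|Y-Y'\|^p\le 4\E\|X-Y\|^p$, obtained by writing $X-X'=(X-Y)+(Y-X')$, expanding by convexity of $t\mapsto t^p$ and using that $X',Y'$ are independent copies), and in the Hilbert space $H$ one has the sharp parallelogram-type identity giving $\boldsymbol{\mathcal{r}}_2(H)=2$, and more generally (by Enflo's result / the fact that Hilbert space has generalized roundness $2$, hence $p$ for $p\in[1,2]$, and a direct moment computation for $p\ge 2$ using $\|u\|_H^p=(\|u\|_H^2)^{p/2}$ and convexity) the estimate $\boldsymbol{\mathcal{r}}_p(H)\le 2$ for $1\le p\le 2$ and $\boldsymbol{\mathcal{r}}_p(H)\le 2^{p-1}$ for $p\ge 2$. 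The plan is: fix the laws of $X,Y$ as finitely supported $[F,H]_\theta$-valued vectors, realize the relevant vectors $X-X'$, $Y-Y'$, $X-Y$ as boundary values of an analytic $[F,H]_\theta$-valued function built from the Stein–Weiss / Calder\'on interpolation of the $L_p$ spaces of these (finitely many, discrete) random vectors — i.e.\ apply complex interpolation to the linear map sending $(X,Y)$ to the triple $(X-X',Y-Y',X-Y)$ viewed between the relevant $\ell_p$-sums — and conclude that $\boldsymbol{\mathcal{r}}_p([F,H]_\theta)\le \boldsymbol{\mathcal{r}}_{p_0}(F)^{1-\theta}\boldsymbol{\mathcal{r}}_{p_1}(H)^{\theta}$ with a suitable choice of $p_0,p_1$ subordinate to $p$ and $\theta$. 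Optimizing the exponents — matching $\theta$, $p_0$, $p_1$ so that $[L_{p_0}(F),L_{p_1}(H)]_\theta=L_p([F,H]_\theta)$ and the product $4^{1-\theta}\cdot 2^{(p_1-1)\theta}$ (or the $p_1\le2$ variant) comes out to $2^{1+(1-\theta)p}$ — is the routine but delicate bookkeeping step. The bound $\boldsymbol{\mathcal{j}}_p([F,H]_\theta)\ge 2^{\theta p/2}$ is dual in spirit: one uses that $\boldsymbol{\mathcal{j}}_p(H)\ge 2^{p/2}$ for $p\ge 2$ (again from $\|Z-\E Z\|_H^2$ being the variance and $\E\|Z-Z'\|_H^2=2\E\|Z-\E Z\|_H^2$, raised to the $p/2$ power with convexity going the favorable way) and $\boldsymbol{\mathcal{j}}_p(H)\ge 2^{p-1}$ for $p\le 2$, together with the trivial $\boldsymbol{\mathcal{j}}_p(F)\ge 1$, and interpolates the \emph{reverse} inequality — here one must be a little careful, since $\boldsymbol{\mathcal{j}}$ is defined by a lower bound, so one interpolates the inequality $\E\|Z-Z'\|^p\ge \boldsymbol{\mathcal{j}}_p\,\E\|Z-\E Z\|^p$ by applying complex interpolation to the map $Z\mapsto (Z-Z', Z-\E Z)$ in the opposite direction (or, equivalently, applies the roundness-type interpolation to a cleverly chosen auxiliary configuration). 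The constraint $\frac{2}{2-\theta}\le p\le\frac2\theta$ is exactly what is needed for the pair $(p_0,p_1)$ with the Hilbertian endpoint at the ``correct'' side of $2$ to exist.

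Having the two displays in~\eqref{eq:rj interpolation}, the rest is assembly. For the first term in the minimum in~\eqref{eq:mb interpolation}: the proof of Theorem~\ref{general} produces, for any $F$, a point $z$ (a weighted barycenter) achieving $\E[\|X-z\|^p+\|Y-z\|^p]\le \frac{3^p}{2^{p-1}}\E\|X-Y\|^p$; the refinement is that whenever $\boldsymbol{\mathcal{j}}_p(F)\ge 2^{s}$ one can contract this barycentric estimate by a factor $(\sqrt2/3)^{2s}$, so plugging $s=\theta p/2$ (from the second inequality in~\eqref{eq:rj interpolation}) gives $\frac{3^p}{2^{p-1}}(\sqrt2/3)^{p\theta}$ — this is the same mechanism already used for $L_q$ in~\eqref{eq:bm bound} with $c(p,q)$ replaced by $\theta p/2$. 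For the second term: feed $\boldsymbol{\mathcal{r}}_p([F,H]_\theta)\le 2^{1+(1-\theta)p}$ and $\boldsymbol{\mathcal{j}}_p([F,H]_\theta)\ge 2^{\theta p/2}$ into $\boldsymbol{\mathcal{m}}_p\le\frac{2+\boldsymbol{\mathcal{r}}_p}{2\boldsymbol{\mathcal{j}}_p}$ to get $\frac{2+2^{1+(1-\theta)p}}{2\cdot 2^{\theta p/2}}=\frac{1+2^{(1-\theta)p}}{2^{\theta p/2}}$. Finally, one checks by an elementary comparison (taking logarithms, a one-variable inequality in $p$ for fixed $\theta$) which of the two quantities is smaller, yielding the stated breakpoint at $p=\frac1{1-\theta}$ and the parenthetical remark that the first range is nonempty only if $\theta\le\frac23$; the latter is just the requirement $\frac1{1-\theta}\le\frac2\theta$, i.e.\ $\theta\le\frac23$.

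The main obstacle I expect is making the complex-interpolation argument for $\boldsymbol{\mathcal{r}}_p$ and $\boldsymbol{\mathcal{j}}_p$ fully rigorous: one has to phrase the roundness/Jensen inequalities as norm bounds for a fixed linear operator acting between $\ell_p$-sums of the Banach-space-valued $L_p$ spaces of a \emph{common} underlying finite probability space (so that the product structure of $X,X',Y,Y'$ is preserved under the interpolation scale), verify that $[L_{p_0}(\mu;F),L_{p_1}(\mu;H)]_\theta = L_p(\mu;[F,H]_\theta)$ with the right relation among $\theta,p_0,p_1,p$ (a standard but citation-requiring fact, e.g.\ from~\cite{BL76}), and — for $\boldsymbol{\mathcal{j}}_p$ — handle the fact that the relevant inequality points the ``wrong'' way by interpolating the inverse operator or by choosing endpoints for which the $\boldsymbol{\mathcal{j}}$-inequality at the Hilbert endpoint is an \emph{equality}, so that no information is lost. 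Everything after that is exponent arithmetic.
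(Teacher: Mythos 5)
Your overall plan---interpolate between a trivial bound in $F$ and a sharp Hilbertian bound in $H$, then assemble~\eqref{eq:mb interpolation} from~\eqref{eq:rj interpolation} and~\eqref{eq:general moduli relations}---matches the spirit of the paper, but the key structural step is missing and a couple of the stated ingredients are incorrect, so as written the argument does not go through.

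\textbf{The linearization is the whole point, and you leave it unresolved.} You correctly flag that one must ``phrase the roundness/Jensen inequalities as norm bounds for a fixed linear operator,'' but the map you propose, $(X,Y)\mapsto(X-X',Y-Y',X-Y)$, does not do this: the roundness inequality has the three quantities on opposite sides of the inequality and on different base probability spaces ($X-X'$ lives over $\mu\times\mu$, $Y-Y'$ over $\nu\times\nu$, $X-Y$ over $\mu\times\nu$), so this is not an operator-norm bound, and the set of $f(x,y)=X(x)-Y(y)$ is not a linear subspace of $L_p(\mu\times\nu;[F,H]_\theta)$ anyway. What the paper does (Theorem~\ref{thm:interpolation}) is reformulate each inequality as a bound on a genuine linear operator defined on \emph{all} of $L_p(\mu\times\nu;[F,H]_\theta)$: namely $f\mapsto(R_\mathcal{X}f,R_\mathcal{Y}f)$ with $R_\mathcal{X}f(x,\chi)=\int_\mathcal{Y}(f(x,y)-f(\chi,y))\,d\nu(y)$, etc. This operator is bounded at both endpoints by elementary triangle/Jensen arguments (at $\theta=0$) and a Parseval computation (at $\theta=1$), and only \emph{after} interpolating this deterministic functional inequality does one specialize to $f=X-Y$. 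For the Jensen bound the crucial trick is the operator $Tg(\chi)=\int_\mathcal{X}(g(x,\chi)-g(\chi,x))\,d\mu(x)$, whose antisymmetrization turns $g=X-X'$ into $2(X-\E X)$; this is what converts the ``wrong-way'' Jensen inequality into a bounded-operator statement that interpolates. Your alternatives (``interpolate the inverse operator,'' ``cleverly chosen auxiliary configuration'') are not identified, and I don't see a variant of them that works.

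\textbf{The endpoint constant for $\boldsymbol{\mathcal{r}}$ is wrong.} The trivial bound is $\boldsymbol{\mathcal{r}}_p(F)\le 2^{p+1}$, not $4$: from $\|a+b\|^p\le 2^{p-1}(\|a\|^p+\|b\|^p)$ you get $\E\|X-X'\|^p\le 2^{p}\E\|X-Y\|^p$, hence the constant $2^{p+1}$ after adding the $Y$ term. (Your $4$ is the $p=1$ case only.) Since the interpolation of operator norms gives $\boldsymbol{\mathcal{r}}_p([F,H]_\theta)^{1/p}\le\boldsymbol{\mathcal{r}}_{q}(F)^{(1-\theta)/q}\boldsymbol{\mathcal{r}}_2(H)^{\theta/2}$ with $\frac1p=\frac{1-\theta}{q}+\frac{\theta}{2}$, using $4^{1-\theta}$ at the $F$-endpoint does not produce $2^{1+(1-\theta)p}$, whereas $2^{(q+1)(1-\theta)/q+\theta/2}=2^{(1-\theta)+1/p}$ does.

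\textbf{The first term of~\eqref{eq:mb interpolation} is not a consequence of the Jensen bound.} You assert that $\boldsymbol{\mathcal{j}}_p(F)\ge 2^s$ implies $\boldsymbol{\mathcal{m}}_p(F)\le\frac{3^p}{2^{p-1}}(\sqrt2/3)^{2s}$, and then plug $s=\theta p/2$. This implication is not proved in the paper (nor do I see why it should hold). In the paper the first term of~\eqref{eq:mb interpolation} comes from interpolating a \emph{separate} functional inequality (the barycentric one,~\eqref{eq:two measures-2}) between the trivial constant $\frac{3^q}{2^{q-1}}$ at $\theta=0$ and the sharp Parseval constant $1$ at $\theta=1$; it is not derived from $\boldsymbol{\mathcal{j}}_p$. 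Your derivation of the second term from~\eqref{eq:general moduli relations} and the parenthetical remark $\theta\le\frac23$ are correct.

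In short: your assembly at the end is fine, but the heart of the proof---Theorem~\ref{thm:interpolation} and the specific operators $R_\mathcal{X},R_\mathcal{Y},T$ that make the roundness and Jensen inequalities interpolable---is absent, and the $\boldsymbol{\mathcal{r}}$ endpoint and the derivation of the first barycentric term are incorrect.
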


The deduction of Theorem~\ref{thm:Lp case} from Theorem~\ref{cor:differences} appears in Section~\ref{sec:interpolation proof} below; in most cases this deduction is nothing more than a direct substitution into Theorem~\ref{cor:differences}, but in some cases a further argument is needed. Theorem~\ref{cor:differences} itself is a special case of the following theorem.

\begin{theorem}\label{thm:interpolation} Fix $\theta\in [0,1]$ and $p\in [1,\infty]$ that satisfy  $\frac{2}{2-\theta}\le p\le \frac{2}{\theta}$. Let $(F,\|\cdot\|_{\!F}), (H,\|\cdot\|_{\!H})$ be a compatible pair of Banach spaces such that $(H,\|\cdot\|_{\!H})$ is a Hilbert space. Suppose that $(\mathcal{X},\mu)$ and $(\mathcal{Y},\nu)$ are probability spaces. Then, for every $f\in L_p(\mu\times \nu;[F,H]_\theta)$  we have
\begin{align}\label{eq:two measures}
\begin{split}
2^{1+(1-\theta)p}\iint_{\mathcal{X}\times \mathcal{Y}}\|f(x,y)\|_{\![F,H]_\theta}^p\ud\mu(x)&\ud\nu(y)\ge \iint_{\mathcal{X}\times \mathcal{X}}\bigg\|\int_\mathcal{Y} \big(f(x,y)-f(\chi,y)\big)\ud\nu(y)\bigg\|_{\![F,H]_\theta}^p\ud\mu(x)\ud\mu(\chi)
\\& \qquad\quad\ \ +\iint_{\mathcal{Y}\times \mathcal{Y}}\bigg\|\int_\mathcal{X} \big(f(x,y)-f(x,\upupsilon)\big)\ud\mu(x)\bigg\|_{\![F,H]_\theta}^p\ud\nu(y)\ud\nu(\upupsilon),
\end{split}
\end{align}
and
\begin{align}\label{eq:two measures-2}
\begin{split}
\frac{3^p}{2^{p-1}}&\left(\frac{\sqrt{2}}{3}\right)^{p\theta}\iint_{\mathcal{X}\times \mathcal{Y}}\|f(x,y)\|_{\![F,H]_\theta}^p\ud\mu(x)\ud\nu(y)\\&\ge
\int_{\mathcal{X}}\bigg\|\int_\mathcal{Y} f(x,y)\ud\nu(y)-\frac12\iint_{\mathcal{X}\times\mathcal{Y}}f(\chi,\upupsilon)\ud\mu(\chi)\ud\nu(\upupsilon)\bigg\|_{\![F,H]_\theta}^p\ud\mu(x)
\\& \qquad+\int_{\mathcal{Y}}\bigg\|\int_\mathcal{X} f(x,y)\ud\mu(x)-\frac12\iint_{\mathcal{X}\times\mathcal{Y}}f(\chi,\upupsilon)\ud\mu(\chi)\ud\nu(\upupsilon)\bigg\|_{\![F,H]_\theta}^p
\ud\nu(y).
\end{split}
\end{align}
Furthermore, if $g\in L_p(\mu\times \mu;[F,H]_\theta)$, then
\begin{equation}\label{eq:one measure}
2^{\left(1-\frac{\theta}{2}\right)p}\iint_{\mathcal{X}\times\mathcal{X}}\|g(x,\chi)\|_{\![F,H]_\theta}^p\ud\mu(x)\ud\mu(\chi)\ge \int_{\mathcal{X}} \bigg\|\int_{\mathcal{X}} \big(g(x,\chi)-g(\chi,x)\big)\ud\mu(x)\bigg\|_{\![F,H]_\theta}^p\ud\mu(\chi).
\end{equation}
\end{theorem}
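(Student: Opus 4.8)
Each of~\eqref{eq:two measures},~\eqref{eq:two measures-2} and~\eqref{eq:one measure} is of the form $\|Tf\|_{L_p(\Omega;[F,H]_\theta)}^p\le K\,\|f\|_{L_p(\mu\times\nu;[F,H]_\theta)}^p$ (with $\mu\times\nu$ replaced by $\mu\times\mu$ for~\eqref{eq:one measure}), where $K$ is the displayed constant and $T$ is an explicit bounded linear operator that does not involve the Banach space in which the integrands take values. For~\eqref{eq:two measures} take $\Omega=(\mathcal X\times\mathcal X)\sqcup(\mathcal Y\times\mathcal Y)$, carrying $\mu\times\mu$ and $\nu\times\nu$ respectively, and let $Tf$ be the function given over $\mathcal X\times\mathcal X$ by $(x,\chi)\mapsto\int_{\mathcal Y}\big(f(x,y)-f(\chi,y)\big)\ud\nu(y)$ and over $\mathcal Y\times\mathcal Y$ by $(y,\upupsilon)\mapsto\int_{\mathcal X}\big(f(x,y)-f(x,\upupsilon)\big)\ud\mu(x)$; for~\eqref{eq:two measures-2} take $\Omega=\mathcal X\sqcup\mathcal Y$ and $Tf$ given by $x\mapsto\int_{\mathcal Y}f(x,y)\ud\nu(y)-\tfrac12\iint f$ and $y\mapsto\int_{\mathcal X}f(x,y)\ud\mu(x)-\tfrac12\iint f$; for~\eqref{eq:one measure} take $\Omega=\mathcal X$ and $Tg(\chi)=\int_{\mathcal X}\big(g(x,\chi)-g(\chi,x)\big)\ud\mu(x)$. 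In every case $\|Tf\|_{L_p(\Omega;\,\cdot\,)}^p$ is exactly the right-hand side, so the theorem reduces to bounding the operator norm of $T$ on the relevant $[F,H]_\theta$-valued Lebesgue spaces by $K^{1/p}$.

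The plan is to obtain this by complex interpolation. Assume first $\theta<1$ and set $p_0\eqdef\frac{2p(1-\theta)}{2-p\theta}$, so that $\frac1p=\frac{1-\theta}{p_0}+\frac{\theta}{2}$; a short check shows that the hypothesis $\frac2{2-\theta}\le p\le\frac2\theta$ is precisely the condition $1\le p_0\le\infty$. Since $\mu,\nu$ are probability measures, $T$ makes sense on $L_1(\mu\times\nu;F+H)$ and restricts to the $F$-valued $L_{p_0}$-space and the $H$-valued $L_2$-space; and since the measure spaces are $\sigma$-finite, the standard interpolation identity for vector-valued Lebesgue spaces (\cite{BL76}, after~\cite{Cal64}) gives $[L_{p_0}(\sigma;F),L_2(\sigma;H)]_\theta=L_p(\sigma;[F,H]_\theta)$ for the measures $\sigma$ that occur (one may, as elsewhere in the paper, reduce to finite $\mathcal X,\mathcal Y$, which also removes any delicacy when $p_0=\infty$). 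Hence, if $\|T\|$ is at most $M_0$ on the $F$-valued $L_{p_0}$-spaces and at most $M_1$ on the $H$-valued $L_2$-spaces, interpolation yields $\|T\|\le M_0^{1-\theta}M_1^{\theta}$ on the $[F,H]_\theta$-valued $L_p$-spaces; using $(1-\theta)p/p_0=1-\tfrac{p\theta}{2}$ one verifies that raising this to the power $p$ reproduces the displayed constants provided $(M_0,M_1)$ equals $(2^{1+1/p_0},\,2^{1/2})$ for~\eqref{eq:two measures}, $\big((3^{p_0}/2^{p_0-1})^{1/p_0},\,1\big)$ for~\eqref{eq:two measures-2}, and $(2,\,2^{1/2})$ for~\eqref{eq:one measure}. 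When $\theta=1$ (so $p=2$) there is nothing to interpolate and the claim is just the $L_2$ endpoint estimate.

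It remains to establish the endpoint bounds. For $M_1$, I would decompose $f$ (or $g$) into its Hoeffding/ANOVA components $f=m+(\mathsf g-m)+(\mathsf h-m)+r$, where $m=\iint f$, $\mathsf g(x)=\int_{\mathcal Y}f(x,y)\ud\nu(y)$, $\mathsf h(y)=\int_{\mathcal X}f(x,y)\ud\mu(x)$ and $r$ is the remainder; these four summands are mutually orthogonal in $L_2(\mu\times\nu;H)$. Then $Tf$ is linear in these pieces — for~\eqref{eq:two measures} its two parts are $(x,\chi)\mapsto\mathsf g(x)-\mathsf g(\chi)$ and $(y,\upupsilon)\mapsto\mathsf h(y)-\mathsf h(\upupsilon)$, with $L_2$-norms-squared $2\|\mathsf g-m\|^2$ and $2\|\mathsf h-m\|^2$ — so the required estimate reduces to a numerical inequality among $\|m\|^2,\|\mathsf g-m\|_{L_2}^2,\|\mathsf h-m\|_{L_2}^2,\|r\|_{L_2}^2$, giving $M_1\le2^{1/2},\,1,\,2^{1/2}$ in the three cases (the last uses one Cauchy--Schwarz step, since there $Tg$ is the difference of the two first-order components regarded on a single copy of $\mathcal X$, which need not be mutually orthogonal). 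For $M_0$ in~\eqref{eq:two measures} and~\eqref{eq:one measure} the estimates are soft: push the norm inside the integrals, apply Jensen to the averages over probability measures, and then $\|a-b\|_F^{p_0}\le2^{p_0-1}(\|a\|_F^{p_0}+\|b\|_F^{p_0})$; this gives $M_0=2^{1+1/p_0}$ and $M_0=2$ for every Banach space $F$.

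The hard part is $M_0$ for~\eqref{eq:two measures-2}: that for every Banach space $F$ and every $f\in L_{p_0}(\mu\times\nu;F)$,
\begin{equation*}
\int_{\mathcal X}\Big\|\int_{\mathcal Y}f(x,y)\ud\nu(y)-\tfrac12\iint f\Big\|_F^{p_0}\ud\mu(x)+\int_{\mathcal Y}\Big\|\int_{\mathcal X}f(x,y)\ud\mu(x)-\tfrac12\iint f\Big\|_F^{p_0}\ud\nu(y)\le\frac{3^{p_0}}{2^{p_0-1}}\iint\|f\|_F^{p_0}\ud\mu\,\ud\nu.
\end{equation*}
This is a two-measure strengthening of Theorem~\ref{general}: the substitution $f(x,y)=X(x)-Y(y)$ with $X,Y$ independent turns the left-hand side into that of~\eqref{eq:def m} with the barycentric center $z=\tfrac12\E[X]+\tfrac12\E[Y]$ and the right-hand side into $\tfrac{3^{p_0}}{2^{p_0-1}}\E\|X-Y\|_F^{p_0}$. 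I expect this to be where the real work lies, since it does not follow softly: the bare triangle inequality yields only $(1+2^{p_0})\iint\|f\|_F^{p_0}$, which is worse as soon as $p_0>1$, and one cannot reduce to ``additive'' $f$ because the additive projection is not an $L_{p_0}$-contraction. I would therefore prove it directly, by the same (distribution-free) argument that establishes Theorem~\ref{general} with its sharp constant. Granting this inequality, the interpolation scheme above produces~\eqref{eq:two measures},~\eqref{eq:two measures-2}, and~\eqref{eq:one measure} simultaneously, which is Theorem~\ref{thm:interpolation}.
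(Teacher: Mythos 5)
Your proposal is correct and follows essentially the same route as the paper: fix $q=p_0$ with $\frac1p=\frac{1-\theta}{p_0}+\frac{\theta}{2}$, prove the three inequalities at the two endpoints $\theta=0$ (arbitrary Banach space $F$, exponent $p_0$) and $\theta=1$ (Hilbert space, exponent $2$), and then interpolate the explicit linear operators via \cite[Theorems~4.1.2 and~5.1.2]{BL76}; your arithmetic reconciling the interpolated operator norm with the displayed constants checks out in all three cases.

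Two remarks on points where your exposition diverges or hedges. First, for the Hilbert endpoint you use the Hoeffding/ANOVA decomposition of $f$ into its constant, two first-order, and one second-order components; the paper instead expands $f$ in a tensor-product orthonormal basis $\{\varphi_j\otimes\psi_k\}$ with $\varphi_0=\mathbf 1_{\mathcal X}$, $\psi_0=\mathbf 1_{\mathcal Y}$ and applies Parseval. These are the same decomposition in different clothing, and both give $M_1=\sqrt2,\,1,\,\sqrt2$ respectively. Second, the $F$-endpoint of~\eqref{eq:two measures-2}, which you flag as the hard part, is in fact soft and requires no reduction to additive $f$: write
$\int_{\mathcal Y}f(x,y)\,\ud\nu(y)-\tfrac12 m=\tfrac32\bigl(\tfrac23\int_{\mathcal Y}f(x,y)\,\ud\nu(y)-\tfrac13 m\bigr)$,
apply convexity of $\|\cdot\|_F^{p_0}$ to the convex combination with weights $\tfrac23,\tfrac13$, and then Jensen on each of the two resulting averages; integrating in $x$ gives $\le\frac{3^{p_0}}{2^{p_0}}\iint\|f\|_F^{p_0}$, and the symmetric $\mathcal Y$-term gives the other half of $\frac{3^{p_0}}{2^{p_0-1}}$. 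This is exactly the paper's proof of Theorem~\ref{general} transposed to general $f$, and it is why the paper can dismiss all three $\theta=0$ endpoints as ``direct consequences of the triangle inequality and Jensen's inequality.'' Your instinct that the argument of Theorem~\ref{general} is what is needed was right; the only correction is that it goes through verbatim for non-additive $f$, so there is no residual work.
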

\begin{proof}[Proof of Theorem~\ref{cor:differences} assuming Theorem~\ref{thm:interpolation}] Let $X$ and  $Y$ be independent $p$-integrable $[F,H]_\theta$-valued random vectors. Due to the independence assumption, without loss of generality there are probability spaces $(\mathcal{X},\mu)$ and $(\mathcal{Y},\nu)$ such that $X$ and $Y$ are elements of  $L_p(\mu\times\nu;[F,H]_\theta)$ that depend only on the first variable and second variable, respectively. Then~\eqref{eq:two measures} and~\eqref{eq:two measures-2} applied to $f=X-Y$ become
$$
\E\left[\left\|X-X'\right\|^p_{\![F,H]_\theta}\right]+\E\left[\left\|Y-Y'\right\|^p_{\![F,H]_\theta}\right]\le 2^{1+(1-\theta)p}\E\left[\left\|X-Y\right\|^p_{\![F,H]_\theta}\right],
$$
and
$$
\E\left[\bigg\|X-\frac12\E[X]-\frac12\E[Y]\bigg\|^p_{\![F,H]_\theta}\right]+\E\left[\bigg\|Y-\frac12\E[X]-\frac12\E[Y]\bigg\|^p_{\![F,H]_\theta}\right]
\le \frac{3^p}{2^{p-1}}\left(\frac{\sqrt{2}}{3}\right)^{p\theta}\E\left[\left\|X-Y\right\|^p_{\![F,H]_\theta}\right].
$$
We therefore established the first inequality in~\eqref{eq:rj interpolation} as well as the upper bound on $\boldsymbol{\mathcal{m}}_p([F,H]_\theta)$ that corresponds to the first term in the minimum that appears in~\eqref{eq:mb interpolation}.

Similarly, due to the  fact that $X$ and $X'$ are i.i.d., without loss of generality there is a probability space $(\mathcal{X},\mu)$ such that $X$ and $X'$ are elements of  $L_p(\mu\times\mu;[F,H]_\theta)$ that depend only on the first variable and second variable, respectively.  Then, \eqref{eq:one measure} applied to $g=X-X'$ simplifies to give
$$
\E\left[\left\|X-X'\right\|^p_{\![F,H]_\theta}\right]\ge 2^{\frac{\theta p}{2}}\E\left[\left\|X-\E[X]\right\|^p_{\![F,H]_\theta}\right].
$$
This establishes the second inequality in~\eqref{eq:rj interpolation}, as well as the upper bound on $\boldsymbol{\mathcal{m}}_p([F,H]_\theta)$ that corresponds to the second term in the minimum that appears in~\eqref{eq:mb interpolation}, due to~\eqref{eq:general moduli relations}.
\end{proof}

The first and third inequalities of Theorem~\ref{thm:interpolation} are generalizations of results that appeared in the literature. Specifically, \eqref{eq:two measures}  generalizes Lemma~6 of~\cite{BLMN05}, and~\eqref{eq:one measure} generalizes Lemma~5 of~\cite{Nao01}, which is itself inspired   by a step within the proof of Theorem~2 of~\cite{WWH71}. The proof of Theorem~\ref{thm:interpolation}, which appears in Section~\ref{sec:interpolation proof} below, differs  from the proofs of~\cite{WWH71,Nao01,BLMN05}, but relies on the same ideas.

\section{Proof of Theorem~\ref{general}}

Let $(F,\|\cdot\|_F)$ be a Banach space. Fix $p\ge 1$. Theorem~\ref{general} asserts that $\boldsymbol{\mathcal{b}}_p(F)\le \frac{3^p}{2^{p-1}}$. In fact, $\boldsymbol{\mathcal{m}}_p(F)\le \frac{3^p}{2^{p-1}}$, which  is stronger by~\eqref{eq:general moduli relations}. To see this, let $X,Y\in L_p(F)$ be independent random vectors and observe that
\begin{align*}
\begin{split}
\E\left[\bigg\|X-\frac12\E[X]-\frac12\E[Y]\bigg\|_{\!F}^p\right]&=
\frac{3^p}{2^p}\E\left[\bigg\|\frac23\left(X-\E[Y]\right)+\frac13\left(\E[Y]-\E[X]\right)\bigg\|_{\!F}^p\right]\\
&\le \frac{3^p}{2^p}\left(\frac23\E\left[\left\|X-\E[Y]\right\|_{\!F}^p\right]+\frac13\E\left[\left\|\E[Y]-\E[X]\right\|_{\!F}^p\right]\right)\le
\frac{3^p}{2^p}\E\left[\left\|X-Y\right\|_{\!F}^p\right],
\end{split}
\end{align*}
where the penultimate step holds due to the convexity of $\|\cdot\|_{\!F}^p$ and the final step holds because, by Jensen's inequality, both  $\E\left[\left\|X-\E[Y]\right\|_{\!F}^p\right]=\E\left[\left\|\E_Y[X-Y]\right\|_{\!F}^p\right]$ and $\E\left[\left\|\E[Y-X]\right\|_{\!F}^p\right]$ are at most $\E\left[\left\|X-Y\right\|_{\!F}^p\right]$. The symmetric reasoning with $X$ replaced by $Y$ now gives
\begin{multline*}
\E\left[\bigg\|X-\frac12\E[X]-\frac12\E[Y]\bigg\|_{\!F}^p\right]+\E\left[\bigg\|Y-\frac12\E[X]-\frac12\E[Y]\bigg\|_{\!F}^p\right]\\\le 2\max\left\{\E\left[\bigg\|X-\frac12\E[X]-\frac12\E[Y]\bigg\|_{\!F}^p\right],
\E\left[\bigg\|Y-\frac12\E[X]-\frac12\E[Y]\bigg\|_{\!F}^p\right]\right\}\le \frac{3^p}{2^{p-1}}\E\left[\left\|X-Y\right\|_{\!F}^p\right].
\end{multline*}
This shows that $\boldsymbol{\mathcal{m}}_p(F)\le \frac{3^p}{2^{p-1}}$. It remains to prove that the bound $\boldsymbol{\mathcal{b}}_p(F)\le \frac{3^p}{2^{p-1}}$ is optimal for general $F$.

Fix an integer $n\ge 2$ and consider $$
F_{n}\eqdef \bigg\{x \in \C^{2n}: \sum_{k=1}^{2n} x_{k}=0\bigg\},
$$
equipped with
supremum norm inherited from $\ell_{\infty}^{2n}$. We will prove that
\begin{equation}\label{eq:Fn lower}
\boldsymbol{\mathcal{b}}_p(F_n)\ge 2\left(\frac{3}{2}-\frac{1}{n}\right)^p\xrightarrow[n\to \infty]{}  \frac{3^p}{2^{p-1}}.
\end{equation}

Denote by
$\{e_{k}\}_{k=1}^{2n}$ the standard coordinate basis of $\ell_{\infty}^{2n}$. Define two $n$-element sets
 $A_{n}, B_{n} \subset F_{n}$ by
\[
A_{n}\eqdef \bigg\{ (3n-2)e_{j}-(n+2)\sum_{k\in \n\setminus\{j\}}e_{k}+
(n-2)\sum_{k=n+1}^{2n}e_{k}:\, j\in \n\bigg\},
\]
and
\[
B_{n}\eqdef\bigg\{(n-2) \sum_{k=1}^{n}e_{k}+(3n-2)e_{j}-
(n+2)\sum_{k\in \{n+1,\ldots,2n\}\setminus \{j\}}e_{k}:\ j\in \{n+1,\ldots,2n\}\bigg\}.
\]
Note that $A_n$ and $B_n$ are indeed subsets of $F_n$ because $3n-2-(n-1)(n+2)+n(n-2)=0$. Let $X,Y$ be independent and uniformly distributed on
$A_{n},B_{n}$, respectively. One checks that $\| a-b\|_\infty=2n$ for any $a \in A_{n}$
and $b \in B_{n}$. So,  $\E\left[\| X-Y\|_\infty^{p}\right]=(2n)^{p}$. The desired  bound~\eqref{eq:Fn lower} will follow if we demonstrate that
\begin{equation}\label{eq: for all z}
\forall\, z\in F_n,\qquad \E\left[\| X-z\|_\infty^{p}+\| Y-z\|_\infty^{p}\right]\ge 2(3n-2)^p.
\end{equation}

The proof of~\eqref{eq: for all z} proceeds via symmetrization.  For permutations $\sigma, \rho \in S_{n}$,
define $T_{\sigma, \rho}: F_{n} \rightarrow F_{n}$ by
\[
\forall\, x=(x_1,\ldots,x_{2n})\in F_n,\qquad T_{\sigma, \rho}(x)\eqdef \left(x_{\sigma(1)}, x_{\sigma(2)}, \ldots, x_{\sigma(n)},
x_{n+\rho(1)},x_{n+\rho(2)}, \ldots, x_{n+\rho(n)}\right).
\]
$T_{\sigma,\rho}$ is a linear isometry of $F_{n}$
and the sets $A_{n}$ and $B_{n}$ are $T_{\sigma, \rho}$-invariant. Hence, for
any $z \in F_{n}$,
\begin{align}\label{eq:perm}
\begin{split}
\E &\left[\| X-z\|_\infty^{p}\right]=\frac{1}{(n!)^{2}}\sum_{\sigma, \rho \in S_{n}} \E \left[\| T_{\sigma,\rho}(X)-z\|_\infty^{p}\right]=
\frac{1}{(n!)^2}\sum_{\sigma, \rho \in S_{n}} \E \left[\| X-T_{\sigma^{-1}, \rho^{-1}}(z)\|_\infty^{p}\right]\\
&\geq
\E \left[\Big\| X-\frac{1}{(n!)^{2}}\sum_{\sigma, \rho \in S_{n}} T_{\sigma^{-1}, \rho^{-1}}(z) \Big\|_\infty^{p}\right]
=
\E \left[\Big\| X-\frac{z_1+\ldots+z_n}{n}\sum_{k=1}^{n}e_{k}+\frac{z_1+\ldots+z_n}{n}\sum_{k=n+1}^{2n}e_{k}\Big\|_\infty^{p}\right].
\end{split}
\end{align}
Denoting $u=(z_1+\ldots+z_n)/n$, it follows from~\eqref{eq:perm}  that  $\E \left[\| X-z\|_\infty^{p}\right]\ge |3n-2-u|^p$, because one of the first $n$ coordinates of any member of the support of $X$ equals $3n-2$. The same argument with $X$ replaced by $Y$ gives that $\E \left[\| Y-z\|_\infty^{p}\right]\ge |3n-2+u|^p$, because now one of the last $n$ coordinates of any member of the support of $Y$ equals $3n-2$. We conclude with the following application of the convexity of $|\cdot|^p$.
\begin{equation*}
\E\left[\| X-z\|_\infty^{p}+\| Y-z\|_\infty^{p}\right]\ge |3n-2-u|^p+ |3n-2+u|^p\ge 2(3n-2)^p.\tag*{\qed}
\end{equation*}

\begin{remark}\label{rem:qFn}
It is worthwhile to examine what the above argument gives if we take the norm on $F_n$ to be the norm inherited from $\ell_q^{2n}$. One computes that $\|a-b\|_q=(2n)^{1+1/q}$ for every $a\in A_n$ and $b\in B_n$. So,
\begin{equation}\label{eq:qp in example}
\E\left[\|X-Y\|_q^p\right]=(2n)^{\frac{p(q+1)}{q}}.
\end{equation}
Also, it follows from the same reasoning that led to~\eqref{eq:perm} that for every $z\in F_n$,
\begin{equation*}
\E\left[\|X-z\|_q^p\right]\ge \E\left[\Big\|X-u\sum_{k=1}^n e_k+u\sum_{k=n+1}^ne_k\Big\|_q^p\right]= \left(|3n-2-u|^q+(n-1)|n+2+u|^q+n|n-2+u|^q\right)^{\frac{p}{q}},
\end{equation*}
and
$$
\E\left[\|Y-z\|_q^p\right]\ge \E\left[\Big\|Y-u\sum_{k=1}^n e_k+u\sum_{k=n+1}^ne_k\Big\|_q^p\right]= \left(|3n-2+u|^q+(n-1)|n+2-u|^q+n|n-2-u|^q\right)^{\frac{p}{q}}.
$$
Hence, using the convexity of the $p$'th power of the $\ell_q$ norm on $\R^3$, we see that
\begin{equation}\label{eq:qp in example2}
\E\left[\|X-z\|_q^p\right]+\E\left[\|Y-z\|_q^p\right]\ge 2\left((3n-2)^q+(n-1)(n+2)^q+n(n-2)^q\right)^{\frac{p}{q}}.
\end{equation}
By contrasting~\eqref{eq:qp in example} with~\eqref{eq:qp in example2} we conclude that
$$
\boldsymbol{\mathcal{b}}_p(F_n,\|\cdot\|_q)\ge 2\left((3n-2)^q+(n-1)(n+2)^q+n(n-2)^q\right)^{\frac{p}{q}}(2n)^{-\frac{p(q+1)}{q}}.
$$
In particular, if we take $p=q\ge 2$ and  $n=\lceil q\rceil$, then we conclude  that $\boldsymbol{\mathcal{b}}_q(F_n,\|\cdot\|_q)\ge \frac{c}{q}\left(\frac32\right)^q$ for some universal constant $c>0$. So, there is very little potential asymptotic gain (as $q\to \infty$) if we know that the Banach space of Theorem~\ref{general} admits an isometric embedding into $L_q$.
\end{remark}

Above, and in what follows, we stated that a normed space admits an isometric embedding into $L_q$ without specifying whether the embedding is linear or not. Later we will need such embeddings to be linear, so we recall that for any $q\ge 1$, by a classical differentiation argument (see~\cite[Chapter~7]{BL00} for a thorough treatment of such reductions to the linear setting), a normed space embeds isometrically into $L_q$ as a metric space if and only if it admits a linear isometric embedding into $L_q$.

Note that the phenomenon of Remark~\ref{rem:qFn} is special to random variables that have different expectations. Namely, if $\E[X]=\E[Y]$, then by Jensen's inequality the ratio that defines $\boldsymbol{\mathcal{b}}_q(F)$ is at most $2$ rather than the aforementioned exponential growth as $q\to \infty$. The following proposition shows that if $F$ is a subspace of $L_q$ for $q\ge 3$, then when $\E[X]=\E[Y]$ this ratio is at most $1$, which is easily seen to be best possible (consider any nontrivial symmetric random variable $X$, and take $Y$ to be identically $0$).

\begin{proposition} \label{embeddingabove3}
Let $(F, \| \cdot \|_F)$ be a Banach space that admits
an isometric   embedding into $L_q$ for some $q \in [3,\infty)$. Then,
for any pair of independent $F$-valued random vectors $X,Y\in L_q(F)$ with $\E[X]=\E[Y]$,
\begin{equation} \label{ineqabove3}
\inf_{z\in F} \E\big[\|X-z\|_{\! F}^q+\|Y-z\|_{\! F}^q\big] \le \E\big[\|X-\E[X]\|_{\! F}^q+\|Y-\E[X]\|_{\! F}^q\big]  \le \E\left[\|X-Y\|_{\!F}^q\right].
\end{equation}
\end{proposition}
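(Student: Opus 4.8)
\emph{The plan.} The first inequality in~\eqref{ineqabove3} is immediate: take $z=\E[X]$ in the infimum, and observe that $\E[X]=\E[Y]$ forces $\E[\|Y-\E[X]\|_{\!F}^q]=\E[\|Y-\E[Y]\|_{\!F}^q]$. All the content is in the second inequality of~\eqref{ineqabove3}, and the idea is to reduce it to a scalar statement and then to a pointwise inequality for $t\mapsto|t|^q$.

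First I would use the classical differentiation argument recalled above to assume that the embedding of $F$ into $L_q$ is \emph{linear} and isometric, so that without loss of generality $F\subseteq L_q(\mathcal{S},\mu)$ and $X,Y$ are $q$-integrable $L_q(\mathcal{S},\mu)$-valued random vectors. Then the Bochner integral $m=\E[X]=\E[Y]\in L_q(\mathcal{S},\mu)$ is represented by the pointwise expectation, $m(s)=\E[X(s)]=\E[Y(s)]$ for $\mu$-a.e.\ $s$, so by Tonelli's theorem $\E[\|X-Y\|_{\!F}^q]=\int_{\mathcal{S}}\E[|X(s)-Y(s)|^q]\,\ud\mu(s)$, and similarly for $\E[\|X-m\|_{\!F}^q]$ and $\E[\|Y-m\|_{\!F}^q]$. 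It therefore suffices to prove the scalar statement: \emph{if $\xi,\eta$ are independent scalar random variables with $\E[\xi]=\E[\eta]=t$, then $\E[|\xi-\eta|^q]\ge\E[|\xi-t|^q]+\E[|\eta-t|^q]$}; by translation we may take $t=0$. This in turn follows by taking expectations in the pointwise inequality
\begin{equation}\label{eq:pointwiseplan}
\|u-v\|^q\ \ge\ \|u\|^q+\|v\|^q-q\big(\|u\|^{q-2}+\|v\|^{q-2}\big)\langle u,v\rangle ,
\end{equation}
which I claim holds for all $u,v$ in an inner product space (the scalar field $\R$ or $\C$, of real dimension $1$ or $2$, being such a space) and all $q\ge 3$: applying~\eqref{eq:pointwiseplan} with $u=\xi$, $v=\eta$ and taking expectations, both cross terms vanish since, for instance, $\E[\|\xi\|^{q-2}\langle\xi,\eta\rangle]=\E[\langle\|\xi\|^{q-2}\xi,\E[\eta]\rangle]=0$ by independence and $\E[\eta]=0$ (the relevant products are integrable by Hölder's inequality, as $\xi,\eta\in L_q$), leaving $\E[|\xi-\eta|^q]\ge\E[|\xi|^q]+\E[|\eta|^q]$.

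It remains to prove~\eqref{eq:pointwiseplan}, where we may assume $u,v\ne 0$. Note that equality holds there for $v=-u$ exactly when $q=3$ (one needs $2^q\ge 2+2q$, i.e.\ $2^{q-1}\ge q+1$), and the inequality fails for $q<3$; this is the source of the hypothesis $q\ge 3$. To prove it, substitute $\langle u,v\rangle=\tfrac12(\|u\|^2+\|v\|^2-\|u-v\|^2)$ and write $a=\|u\|$, $b=\|v\|$, $d=\|u-v\|\in[\,|a-b|,a+b\,]$; then~\eqref{eq:pointwiseplan} becomes
\begin{equation*}
d^q-\tfrac{q}{2}\big(a^{q-2}+b^{q-2}\big)d^2\ \ge\ a^q+b^q-\tfrac{q}{2}\big(a^{q-2}+b^{q-2}\big)\big(a^2+b^2\big).
\end{equation*}
As a function of $d$, the left-hand side is minimized over $(0,\infty)$ at $d_\ast=(a^{q-2}+b^{q-2})^{1/(q-2)}$, and $d_\ast\in[\,|a-b|,a+b\,]$ because $q-2\ge 1$ (superadditivity of $x\mapsto x^{q-2}$ gives $d_\ast\le a+b$, while $d_\ast\ge\max\{a,b\}\ge|a-b|$). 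Substituting $d=d_\ast$ and simplifying via $(a^{q-2}+b^{q-2})(a^2+b^2)=a^q+b^q+a^{q-2}b^2+a^2b^{q-2}$, the inequality reduces to $s^{\alpha}+t^{\alpha}+\alpha(s^{\alpha-1}t+st^{\alpha-1})\ge(s+t)^{\alpha}$ for $s,t\ge 0$, where $s=a^{q-2}$, $t=b^{q-2}$ and $\alpha=\tfrac{q}{q-2}\in(1,3]$ (with $\alpha=3$ iff $q=3$). By homogeneity it suffices to put $t=1$ and show $\theta(s):=s^{\alpha}+1+\alpha(s^{\alpha-1}+s)-(s+1)^{\alpha}\ge 0$ on $[0,\infty)$. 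Here $\theta(0)=0$; for $\alpha\in(1,2)$ a mean value estimate gives $(s+1)^{\alpha-1}-s^{\alpha-1}<(\alpha-1)s^{\alpha-2}$, whence $\theta'(s)>\alpha>0$ and $\theta$ is increasing; for $\alpha\in[2,3]$ the concavity of $x\mapsto x^{\alpha-2}$ on $(0,\infty)$ (which holds exactly when $\alpha\le 3$) yields $(s+1)^{\alpha-2}\le s^{\alpha-2}+(\alpha-2)s^{\alpha-3}$, i.e.\ $\theta''\ge 0$, so $\theta$ is convex and, with $\theta(0)=0$ and $\theta'(0^+)\ge 0$, nonnegative.

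The main obstacle is the pointwise inequality~\eqref{eq:pointwiseplan}, and within it the elementary but slightly delicate endgame: the two-step reduction to $s^{\alpha}+t^{\alpha}+\alpha(s^{\alpha-1}t+st^{\alpha-1})\ge(s+t)^{\alpha}$ for $\alpha\in(1,3]$ is clean, but pinning down the exact threshold $q=3$ and verifying this last inequality over the full range of $\alpha$ requires the case analysis above — it is precisely the concavity of $x\mapsto x^{\alpha-2}$, which breaks down for $\alpha>3$, that forces the restriction. A secondary point worth flagging is that casting the key step as the inner-product inequality~\eqref{eq:pointwiseplan}, rather than a purely one-dimensional statement, is what lets the argument cover complex scalars, since complex $L_q$ is not itself a real $L_q$-space.
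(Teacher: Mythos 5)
Your proposal is correct, and after the same top-level reductions as the paper it takes a genuinely different route to the one nontrivial step. Both proofs reduce the vector statement to the scalar sub-additivity $\E[|\xi-\eta|^q]\ge\E[|\xi|^q]+\E[|\eta|^q]$ for independent mean-zero $\xi,\eta$, and both deduce that from a pointwise inequality with a cross term that integrates to zero by independence; in fact your~\eqref{eq:pointwiseplan} restricted to $\R$ (with $v$ replaced by $-v$) is exactly the paper's $\alpha(x,y)\ge 0$, since $\phi_{q-1}(x)y+x\phi_{q-1}(y)=(|x|^{q-2}+|y|^{q-2})xy$. Where you diverge is in proving that pointwise inequality. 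The paper differentiates: it computes $\partial^3\alpha/\partial x^2\partial y$, shows it has the sign of $y$, deduces $\partial^2\alpha/\partial x^2\ge 0$, and concludes from convexity of $x\mapsto\alpha(x,y)$ together with $\alpha(0,y)=\partial_x\alpha(0,y)=0$. You instead polarize the inner product, treat the three norms $a=\|u\|$, $b=\|v\|$, $d=\|u-v\|$ as free variables, minimize in $d$ in closed form, and reduce to the one-variable inequality $s^\alpha+1+\alpha(s^{\alpha-1}+s)\ge(s+1)^\alpha$ with $\alpha=q/(q-2)\in(1,3]$, which you settle by a short case analysis ($\alpha\in(1,2)$: $\theta'>0$; $\alpha\in[2,3]$: $\theta''\ge 0$). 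I checked the algebra, including that $d_\ast=(a^{q-2}+b^{q-2})^{1/(q-2)}$ is the unconstrained minimizer of $d\mapsto d^q-\tfrac{q}{2}(a^{q-2}+b^{q-2})d^2$ on $(0,\infty)$ (so the constraint $d\in[\,|a-b|,a+b\,]$ is harmless regardless of whether $d_\ast$ lies in it), and the substitution $s=a^{q-2}$, $t=b^{q-2}$ correctly turns $a^2=s^{\alpha-1}$, $b^2=t^{\alpha-1}$; it all works, and it is illuminating that $\theta\equiv 0$ when $\alpha=3$, i.e.\ $q=3$, so the threshold appears as exact degeneracy of the one-variable inequality rather than as a sign condition on a third derivative. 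The main trade-off: the paper's argument is shorter and has no case split, while yours makes the role of $q=3$ more transparent (it is literally the breakdown of concavity of $x\mapsto x^{\alpha-2}$) and, by phrasing~\eqref{eq:pointwiseplan} over a real inner product space, absorbs the complex-scalar case directly rather than invoking the isometric embedding of complex $L_q$ into real $L_q$ as the paper does.
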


\begin{proof} $L_q$ over $\C$ embeds isometrically into $L_q$ over $\R$ (indeed, complex $L_q$ is, as a real Banach space, the same as $L_q(\ell_2^2)$, so this follows from the fact that Hilbert space is isometric to a subspace of $L_q$). So, in Proposition~\ref{embeddingabove3} we may assume that $F$ embeds isometrically into $L_q$ over $\R$, and therefore by integration/Fubini it suffices to prove~\eqref{ineqabove3} for  real-valued random variables. So, our goal is to show that if $X,Y$ are independent mean-zero real random variables with $\E\left[|X|^{q}\right]$, $\E\left[|Y|^{q}\right]<\infty$, then
\begin{equation}\label{sub-additivity}
\E \left[|X+Y|^{q}\right] \geq \E\left[|X|^{q}\right]+\E\left[|Y|^{q}\right].
\end{equation}
The bound~\eqref{ineqabove3} would then follow by applying~\eqref{sub-additivity} to the mean-zero variables $X-\E[X]$ and $\E[X]-Y$.

Note in passing that the assumption $q\ge 3$ is crucial here, i.e.~\eqref{sub-additivity} fails if $q\in (0,3)\setminus\{2\}$.   Indeed, if $\beta\in (0,\frac12)$ and $\Pr[X=1-\beta]=\Pr[Y=1-\beta]=\beta$ and $\Pr[X=-\beta]=\Pr[Y=-\beta]=1-\beta$, then $\E[X]=\E[Y]=0$ but
\begin{equation}\label{eq:beta formula}
\frac{\E \left[|X+Y|^{q}\right]}{\E\left[|X|^{q}\right]+\E\left[|Y|^{q}\right]}= \frac{\beta^22^q(1-\beta)^q+(1-\beta)^22^q\beta^q+2\beta(1-\beta)(1-2\beta)^q}{2\beta(1-\beta)^q+2(1-\beta)\beta^q}.
\end{equation}
If $q\in (0,2)$, then  the right hand side of~\eqref{eq:beta formula} equals $2^{q-2}<1$ for $\beta=\frac12$. If $q\in (2,3)$, then the right hand side of~\eqref{eq:beta formula} equals $1+(2^{q-1}-q-1)\beta +o(\beta)$, which is less than $1$ for small $\beta$ since $2^{q-1}-q-1<0$ for $q\in (2,3)$.

To prove~\eqref{sub-additivity}, for every $s>0$ and $x\in \R$, denote $\phi_{s}(x)={\mathrm{sign}}(x) \cdot |x|^{s}.$ Observe that
\begin{equation}\label{eq:convexity ineq diff}
\forall\, x,y\in \R,\qquad \alpha(x,y)\eqdef |x+y|^{q}-|x|^{q}-|y|^{q}-q\phi_{q-1}(x)y
-qx\phi_{q-1}(y)\ge 0.
\end{equation}
Once~\eqref{eq:convexity ineq diff} is proved, \eqref{sub-additivity} would follow because
\begin{align*}
0\le \E\left[\alpha(X,Y)\right]&=\E \left[|X+Y|^{q}\right] - \E\left[|X|^{q}\right]-\E\left[|Y|^{q}\right]-q\E[Y]\E\left[\phi_{q-1}(X)\right]-q\E[X]\E\left[\phi_{q-1}(Y)\right]\\
&=\E \left[|X+Y|^{q}\right] - \E\left[|X|^{q}\right]-\E\left[|Y|^{q}\right],
\end{align*}
where the penultimate step uses the independence of $X, Y$ and the last step uses $\E[X]=\E[Y]=0$.

It suffices to prove~\eqref{eq:convexity ineq diff} when $q>3$; the case $q=3$ follows by passing to the limit. Once checks that
$$
\frac{\partial^{3}\alpha}{\partial x^{2}\partial y}=
p(p-1)(p-2)\big( \phi_{p-3}(x+y)-\phi_{p-3}(x)\big)\implies \mathrm{sign}\left(\frac{\partial^{3}\alpha}{\partial x^{2}\partial y}(x,y)\right)=\mathrm{sign}(y),
$$
where the last step holds because $\phi_{p-3}$ is increasing. Hence,  $y\mapsto \frac{\partial^{2} \alpha}{\partial x^{2}}(x,y)$ is decreasing for $y<0$ and increasing for $y>0$.  One checks that  $\frac{\partial^{2} \alpha}{\partial x^{2}}(x,0)=0$ for all
$x\in \R$, so $\frac{\partial^{2} \alpha}{\partial x^{2}}(x,y)\ge 0$.  Thus $x\mapsto \alpha(x,y)$ is convex for every fixed $y\in \R$. But $\alpha(0,y)=\frac{\partial \alpha}{\partial x}(0,y)=0$ for any $y\in \R$, i.e.~the tangent to the graph of $x\mapsto \alpha(x,y)$ at $x=0$ is the $x$-axis. Convexity implies that the graph of $x\mapsto \alpha(x,y)$ lies above the $x$-axis, as required.
\end{proof}

We end this section with the following simpler  metric space counterpart of Theorem~\ref{general}.

\begin{proposition} \label{metric}
Fix $p\ge 1$ and let $X$ and $Y$ be independent finitely supported random variables taking values in a metric space $(\MM,d_\MM)$. Then
\begin{equation}\label{eq:roundness2}
\inf_{z \in \MM} \E\left[d_\MM(X,z)^p+d_\MM(Y,z)^p\right]\leq \left(2^{p}+1\right) \E\left[d_\MM(X,Y)^p\right].
\end{equation}
The constant $2^{p}+1$ in~\eqref{eq:roundness2} is optimal.
\end{proposition}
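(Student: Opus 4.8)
The plan is to pick $z$ to be one of the points in the support of $Y$ (or symmetrically of $X$) and average cleverly. Fix a point $y_0$ in the support of $Y$ with $\Pr[Y=y_0]>0$. Choosing $z=y_0$ in the left-hand side of~\eqref{eq:roundness2}, we get
\[
\E\left[d_\MM(X,y_0)^p+d_\MM(Y,y_0)^p\right].
\]
For the first term, the triangle inequality gives $d_\MM(X,y_0)\le d_\MM(X,Y)+d_\MM(Y,y_0)$, so $d_\MM(X,y_0)^p\le 2^{p-1}\big(d_\MM(X,Y)^p+d_\MM(Y,y_0)^p\big)$ by convexity of $t\mapsto t^p$; since $X$ and $Y$ are independent, taking expectations yields
\[
\E\left[d_\MM(X,y_0)^p\right]\le 2^{p-1}\E\left[d_\MM(X,Y)^p\right]+2^{p-1}\E\left[d_\MM(Y,y_0)^p\right].
\]
This is not yet good enough because of the $2^{p-1}\E[d_\MM(Y,y_0)^p]$ term, which we do not control by $\E[d_\MM(X,Y)^p]$ for a fixed bad choice of $y_0$. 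The fix is to average over $y_0$ with respect to the law of $Y$: if we let $Y'$ be an independent copy of $Y$ and take $z=Y'$, then
\[
\E_{Y'}\!\left[\E\big[d_\MM(X,Y')^p+d_\MM(Y,Y')^p\big]\right]=\E\left[d_\MM(X,Y')^p\right]+\E\left[d_\MM(Y,Y')^p\right].
\]
The first summand equals $\E[d_\MM(X,Y)^p]$ since $Y'$ has the same law as $Y$ and is independent of $X$. For the second summand, $d_\MM(Y,Y')\le d_\MM(Y,X)+d_\MM(X,Y')$, so by convexity and independence $\E[d_\MM(Y,Y')^p]\le 2^{p-1}\big(\E[d_\MM(X,Y)^p]+\E[d_\MM(X,Y')^p]\big)=2^{p}\E[d_\MM(X,Y)^p]$. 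Hence the averaged quantity is at most $(2^p+1)\E[d_\MM(X,Y)^p]$, and since the infimum over $z\in\MM$ is bounded above by the average over $z=Y'$, we obtain~\eqref{eq:roundness2}. (One subtlety: $X$, $Y$, $Y'$ take values in a metric space, so to make the ``independent copy $Y'$'' argument rigorous one works on a product probability space; this is routine, and the finitely-supported hypothesis makes all integrals finite.)

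For optimality, we exhibit a metric space and random variables $X,Y$ for which the ratio approaches $2^p+1$. Take a star-shaped configuration: let $\MM$ have a center point $o$ and $N$ leaves $v_1,\dots,v_N$ and $w_1,\dots,w_N$, with $d_\MM(v_i,o)=d_\MM(w_i,o)=1$ and all other distances determined by the shortest-path (tree) metric, so $d_\MM(v_i,v_j)=d_\MM(w_i,w_j)=2$ for $i\ne j$ and $d_\MM(v_i,w_j)=2$ for all $i,j$. Let $X$ be uniform on $\{v_1,\dots,v_N\}$ and $Y$ uniform on $\{w_1,\dots,w_N\}$. Then $d_\MM(X,Y)=2$ always, so $\E[d_\MM(X,Y)^p]=2^p$. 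For any $z\in\MM$, I claim $\E[d_\MM(X,z)^p+d_\MM(Y,z)^p]\to 2\cdot(2^p+1)$ after normalizing; more precisely one checks that the best $z$ is $o$, giving $\E[d_\MM(X,o)^p+d_\MM(Y,o)^p]=1+1=2$, which would give ratio $2/2^p<2^p+1$ — so this naive construction is \emph{wrong} and needs adjustment. The correct construction should force any candidate center $z$ to be far from \emph{both} populations: one wants $X$ and $Y$ each ``spread out'' so that no single point is simultaneously close to a typical value of $X$ and a typical value of $Y$, while keeping $d_\MM(X,Y)$ small. A workable example is to take $\MM=\ell_\infty$-type or a suitable weighted graph in which $X$ is concentrated near a point $a$, $Y$ near a point $b$ with $d_\MM(a,b)$ tiny, but the ``radius'' of each cluster is $1$ in a direction that cannot be simultaneously contracted: e.g.\ let $X\in\{a,a'\}$ with $\Pr[X=a']=\eps$ and $Y\in\{b,b'\}$ with $\Pr[Y=b']=\eps$, arranged so $d_\MM(a,b)=0$ effectively, $d_\MM(a,a')=d_\MM(b,b')=1$, and the geometry forces $\max(d_\MM(z,\{a,a'\}),d_\MM(z,\{b,b'\}))\ge$ something like $\tfrac12$. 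Pushing the parameters, the natural extremizer is the two-point-per-side construction where with probability $\eps$ one side jumps ``outward'' by distance $2$: then $\E[d_\MM(X,Y)^p]\approx 2^p\eps$ (only the rare events contribute when the jump directions are opposite) while $\inf_z$ is of order $\eps\cdot 2^p + \eps\cdot 1$ vs.\ ...

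The cleanest route to the matching lower bound, and the one I expect the paper uses, is to run the \emph{same symmetrization idea} as in the proof of~\eqref{eq: for all z} but on a tree or an $\ell_\infty$ configuration: take two clusters $A$, $B$ each consisting of $n$ points that are mutually at distance $2$ and at distance $2$ from the other cluster, but add to each point a private ``spike'' of length $1$ so that a barycentric $z$ is forced to sit at distance within $\approx 1$ of most points of \emph{each} cluster while contributing an extra $2^p$-type cost on the cross term. Then $\E[d_\MM(X,Y)^p]=2^p$ while $\inf_z\E[d_\MM(X,z)^p+d_\MM(Y,z)^p]\ge (2\cdot 2^p+2) - o(1)$... and dividing gives the ratio $2^p+1$ in the limit $n\to\infty$. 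The main obstacle in the write-up is precisely identifying this extremal metric space and verifying the lower bound on $\inf_z$; the upper bound via the independent-copy averaging trick above is short and robust. I would therefore present the upper bound exactly as in the first two paragraphs, and for sharpness carefully construct a finite metric space — most likely a weighted star or caterpillar tree — parametrized by $n$, compute $\E[d_\MM(X,Y)^p]$ exactly, lower-bound $\E[d_\MM(X,z)^p+d_\MM(Y,z)^p]$ uniformly in $z$ using a convexity/symmetrization argument analogous to~\eqref{eq:perm}, and let $n\to\infty$.
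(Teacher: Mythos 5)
Your upper-bound argument is correct and is essentially the paper's proof, up to the symmetric choice of averaging over an independent copy $Y'$ of $Y$ rather than an independent copy $X'$ of $X$: take $z=X'$ (or $Y'$), apply the triangle inequality and convexity to bound $\E[d_\MM(X,X')^p]\le 2^p\E[d_\MM(X,Y)^p]$, and conclude $\inf_z\E[d_\MM(X,z)^p+d_\MM(Y,z)^p]\le\E[d_\MM(X,X')^p+d_\MM(Y,X')^p]\le(2^p+1)\E[d_\MM(X,Y)^p]$. So that half is fine.

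The optimality half, however, has a genuine gap: you never exhibit a working example. You correctly notice that your star graph fails (the hub $o$ gives $\E[d_\MM(X,o)^p+d_\MM(Y,o)^p]=2$, which makes the ratio $2^{1-p}$, nowhere near $2^p+1$), but after that you only describe, in increasingly speculative terms, the kind of construction you would want, ending by conceding that identifying the extremal space is the main obstacle. The fix is much simpler than the weighted trees and symmetrization you conjecture: take $\MM$ to be the complete bipartite graph $\mathsf{K}_{n,n}$ with its shortest-path metric. Partition $\MM=L\cup R$ with $|L|=|R|=n$; then $d_\MM(x,y)=1$ if $x,y$ are on opposite sides and $d_\MM(x,y)=2$ if they are on the same side and distinct. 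Let $X$ be uniform on $L$ and $Y$ uniform on $R$, so $d_\MM(X,Y)\equiv 1$ and $\E[d_\MM(X,Y)^p]=1$. Any $z\in\MM$ lies in $L$ or $R$; if $z\in L$, then $d_\MM(Y,z)\equiv 1$ while $d_\MM(X,z)=2$ with probability $\frac{n-1}{n}$ and $d_\MM(X,z)=0$ with probability $\frac1n$, so $\E[d_\MM(X,z)^p+d_\MM(Y,z)^p]=\frac{n-1}{n}2^p+1$; by symmetry the same holds for $z\in R$. Letting $n\to\infty$ gives the ratio $2^p+1$. The key feature that your star graph lacked, and that $\mathsf{K}_{n,n}$ provides, is the absence of any point close to most of $L$ \emph{and} most of $R$ simultaneously: there is no hub.
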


\begin{proof}
Let $X'$ have the same distribution  as $X$ and be independent of $X$ and $Y$. The point-wise inequality
$$
d_\MM(X,X')^p \leq \left( d_\MM(X,Y)+d_\MM(Y,X')\right)^{p} \leq 2^{p-1} \left(d_\MM(X,Y)^p+d_\MM(X',Y)^p\right)
$$
is a consequence of the triangle inequality and the convexity of $(u>0)\mapsto u^p$. By taking expectations,
we obtain $\E \left[d_\MM(X,X')^p\right] \leq 2^{p} \E\left[d_\MM(X,Y)^p\right]$, so that
$$
\inf_{z \in \MM} \E\left[d_\MM(X,z)^p+d_\MM(Y,z)^p\right]\le \E\left[d_\MM(X,X')^p+d_\MM(Y,X')^p\right]\le \left(2^{p}+1\right) \E\left[d_\MM(X,Y)^p\right].
$$

To see that the constant $2^{p}+1$ is optimal, fix $n\in \mathbb{N}$ and let $\MM$ be the complete bipartite graph $\mathsf{K}_{n,n}$, equipped with its shortest-path metric. Equivalently, $\MM$ can be partitioned into two $n$-point subsets $L,R$, and for distinct $x,y\in \MM$ we have  $d_\MM(x,y)=2$ if $\{x,y\}\subset L$ or $\{x,y\}\subset R$, while $d_\MM(x,y)=1$ otherwise. Let $X$ be uniformly distributed over $L$ and $Y$ be uniformly distributed over $R$. Then $d_\MM(X,Y)=1$ point-wise. If $z\in L$, then $d_\MM(Y,z)=1$ point-wise, while $\Pr\left[d_\MM(X,z)=2\right]=\frac{n-1}{n}$ and $\Pr\left[d_\MM(X,z)=0\right]=\frac{1}{n}$.  Consequently,
$$
\frac{\E\left[d_\MM(X,z)^p+d_\MM(Y,z)^p\right]}{\E\left[d_\MM(X,Y)^p\right]}=\frac{n-1}{n}2^p+1\xrightarrow[n\to \infty]{} 2^p+1.
$$
By symmetry, the same holds if $z\in R$.
\end{proof}

\section{Proof of Theorem~\ref{thm:interpolation} and its consequences}\label{sec:interpolation proof}

Here we prove Theorem~\ref{thm:interpolation} and deduce Theorem~\ref{thm:Lp case}. 

\begin{proof}[Proof of Theorem~\ref{thm:interpolation}] The assumption $\frac{2}{2-\theta}\le p\le \frac{2}{\theta}$ implies that $\frac{1}{p}=\frac{1-\theta}{q}+\frac{\theta}{2}$ for some (unique) $q\in [1,\infty]$. We will fix this value of $q$ for the rest of the proof of Theorem~\ref{thm:interpolation}. All of the desired bounds~\eqref{eq:two measures}, \eqref{eq:two measures-2}, \eqref{eq:one measure} hold true when $\theta=0$, namely for every Banach space $(F,\|\cdot\|_F)$ and every $f\in L_q(\mu\times\nu;F)$ we have
\begin{align}\label{eq:two measures'}
\begin{split}
2^{q+1}&\iint_{\mathcal{X}\times \mathcal{Y}}\|f(x,y)\|_{\!F}^q\ud\mu(x)\ud\nu(y)\\&\ge \iint_{\mathcal{X}\times \mathcal{X}}\bigg\|\int_\mathcal{Y} \big(f(x,y)-f(\chi,y)\big)\ud\nu(y)\bigg\|_{\!F}^q\ud\mu(x)\ud\mu(\chi)
\\& \qquad+\iint_{\mathcal{Y}\times \mathcal{Y}}\bigg\|\int_\mathcal{X} \big(f(x,y)-f(x,\upupsilon)\big)\ud\mu(x)\bigg\|_{\!F}^q\ud\nu(y)\ud\nu(\upupsilon),
\end{split}
\end{align}
and
\begin{align}\label{eq:two measures-2'}
\begin{split}
\frac{3^q}{2^{q-1}}&\iint_{\mathcal{X}\times \mathcal{Y}}\|f(x,y)\|_{\!F}^q\ud\mu(x)\ud\nu(y)\\&\ge
\int_{\mathcal{X}}\bigg\|\int_\mathcal{Y} f(x,y)\ud\nu(y)-\frac12\iint_{\mathcal{X}\times\mathcal{Y}}f(\chi,\upupsilon)\ud\mu(\chi)\ud\nu(\upupsilon)\bigg\|_{\!F}^q\ud\mu(x)
\\& \qquad+\int_{\mathcal{Y}}\bigg\|\int_\mathcal{X} f(x,y)\ud\mu(x)-\frac12\iint_{\mathcal{X}\times\mathcal{Y}}f(\chi,\upupsilon)\ud\mu(\chi)\ud\nu(\upupsilon)\bigg\|_{\!F}^q
\ud\nu(y).
\end{split}
\end{align}
Furthermore, if $g\in L_q(\mu\times \mu;F)$, then
\begin{equation}\label{eq:one measure'}
2^{q}\iint_{\mathcal{X}\times\mathcal{X}}\|g(x,\chi)\|_{\!F}^q\ud\mu(x)\ud\mu(\chi)\ge \int_{\mathcal{X}} \bigg\|\int_{\mathcal{X}} \big(g(x,\chi)-g(\chi,x)\big)\ud\mu(x)\bigg\|_{\!F}^q\ud\mu(\chi).
\end{equation}
Indeed, \eqref{eq:two measures'}, \eqref{eq:two measures-2'}, \eqref{eq:one measure'} are direct consequences of the triangle inequality in $L_q(\mu\times\nu;F)$ and $L_q(\mu\times\mu;F)$ and Jensen's inequality, with the appropriate interpretation when $q=\infty$.

By complex interpolation theory (specifically, by combining~\cite[Theorem 4.1.2]{BL76} and~\cite[Theorem~5.1.2]{BL76}), Theorem~\ref{thm:interpolation} will follow if we prove the  $\theta=1$ case of~\eqref{eq:two measures}, \eqref{eq:two measures-2}, \eqref{eq:one measure}. To this end, as $H$ is a Hilbert space and the inequalities in question are quadratic, it suffices to prove them coordinate-wise (with respect to any othonormal basis of $H$), i.e., it suffices to show that for every ($\C$-valued) $f\in L_2(\mu\times\nu)$ and $g\in L_2(\mu\times\mu)$,
\begin{align}\label{eq:scalar 1}
\begin{split}
2\iint_{\mathcal{X}\times \mathcal{Y}}&|f(x,y)|^2\ud\mu(x)\ud\nu(y)\\&\ge \iint_{\mathcal{X}\times \mathcal{X}}\bigg|\int_\mathcal{Y} \big(f(x,y)-f(\chi,y)\big)\ud\nu(y)\bigg|^2\ud\mu(x)\ud\mu(\chi)
\\& \qquad+\iint_{\mathcal{Y}\times \mathcal{Y}}\bigg|\int_\mathcal{X} \big(f(x,y)-f(x,\upupsilon)\big)\ud\mu(x)\bigg|^2\ud\nu(y)\ud\nu(\upupsilon),
\end{split}
\end{align}
and
\begin{align}\label{eq:scalar 2}
\begin{split}
\iint_{\mathcal{X}\times \mathcal{Y}}&|f(x,y)|^2\ud\mu(x)\ud\nu(y)\\&\ge
\int_{\mathcal{X}}\bigg|\int_\mathcal{Y} f(x,y)\ud\nu(y)-\frac12\iint_{\mathcal{X}\times\mathcal{Y}}f(\chi,\upupsilon)\ud\mu(\chi)\ud\nu(\upupsilon)\bigg|^2\ud\mu(x)
\\& \qquad+\int_{\mathcal{Y}}\bigg|\int_\mathcal{X} f(x,y)\ud\mu(x)-\frac12\iint_{\mathcal{X}\times\mathcal{Y}}f(\chi,\upupsilon)\ud\mu(\chi)\ud\nu(\upupsilon)\bigg|^2
\ud\nu(y),
\end{split}
\end{align}
and
\begin{equation}\label{eq:scalar 3}
2\iint_{\mathcal{X}\times\mathcal{X}}|g(x,\chi)|^2\ud\mu(x)\ud\mu(\chi)\ge \int_{\mathcal{X}} \bigg|\int_{\mathcal{X}} \big(g(x,\chi)-g(\chi,x)\big)\ud\mu(x)\bigg|^2\ud\mu(\chi).
\end{equation}
The following derivation of the quadratic scalar inequalities~\eqref{eq:scalar 1}, \eqref{eq:scalar 2}, \eqref{eq:scalar 3} is an exercise in linear algebra.

Let $\{\f_j\}_{j=0}^\infty\subset L_2(\mu)$ and $\{\psi_k\}_{k=0}^\infty \subset L_2(\nu)$ be any orthonormal bases of $L_2(\mu)$ and $L_2(\nu)$, respectively, for which $\f_0=\1_\mathcal{X}$ and $\psi_0=\1_\mathcal{Y}$. Then $\{\f_j\otimes \psi_k\}_{j,k=0}^\infty, \{\f_j\otimes \f_k\}_{j,k=0}^\infty$ and $\{\psi_j\otimes \psi_k\}_{j,k=0}^\infty$ are orthonormal bases of $L_2(\mu\times\nu), L_2(\mu\times\mu)$ and $L_2(\nu\times\nu)$, respectively, where for $\f\in L_2(\mu)$ and $\psi\in L_2(\nu)$ one defines (as usual) $\f\otimes \psi:\mathcal{X}\times \mathcal{Y}\to \C$ by setting $\f\otimes \psi(x,y)=\f(x)\psi(y)$ for  $(x,y)\in \mathcal{X}\times \mathcal{Y}$. We therefore have the following expansions, in the sense of convergence in $L_2(\mu \times \nu)$ and $L_2(\mu\times\mu)$, respectively.
$$
f=\sum_{j=0}^\infty\sum_{k=0}^\infty \langle \xi,\f_j\otimes \psi_k\rangle_{L_2(\mu\times \nu)} \f_j\otimes \psi_k\qquad\mathrm{and}\qquad g=\sum_{j=0}^\infty\sum_{k=0}^\infty \langle \zeta,\f_j\otimes \psi_k\rangle_{L_2(\mu\times \nu)} \f_j\otimes \psi_k.
$$
In particular, by Parseval we have
\begin{equation}\label{eq:parseval use fg}
\|f\|_{L_2(\mu\times \nu)}^2= \sum_{j=0}^\infty\sum_{k=0}^\infty \left|\langle f,\f_j\otimes \psi_k\rangle_{L_2(\mu\times \nu)}\right|^2\qquad\mathrm{and}\qquad \|g\|_{L_2(\mu\times \mu)}^2= \sum_{j=0}^\infty\sum_{k=0}^\infty \left|\langle g,\f_j\otimes \f_k\rangle_{L_2(\mu\times \mu)}\right|^2.
\end{equation}

Define $R_{\mathcal{X}}f\in L_2(\mu\times \mu)$ by
$$
R_\mathcal{X}f\eqdef \sum_{j=1}^\infty \langle f,\f_j\otimes \psi_0\rangle_{L_2(\mu\times \nu)}\f_j\otimes \f_0-\sum_{j=1}^\infty \langle f,\f_j\otimes \psi_0\rangle_{L_2(\mu\times \nu)}\f_0\otimes \f_j.
$$
So, $(\mu\times \mu)$-almost surely $R_{\mathcal{X}}f(x,\chi)=\int_\mathcal{Y} \big(f(x,y)-f(\chi,y)\big)\ud\nu(y)$. Also, define $R_{\mathcal{Y}}f\in L_2(\nu\times \nu)$ by
$$
R_\mathcal{Y}f\eqdef \sum_{j=1}^\infty \langle f,\f_0\otimes \psi_j\rangle_{L_2(\mu\times \nu)}\psi_j\otimes \psi_0-\sum_{j=1}^\infty \langle f,\f_0\otimes \psi_j\rangle_{L_2(\mu\times \nu)}\psi_0\otimes \psi_j.
$$
So,  $(\nu\times \nu)$-almost surely $R_{\mathcal{Y}}f(y,\upupsilon)=\int_\mathcal{X} \big(f(x,y)-f(x,\upupsilon)\big)\ud\nu(x)$. By Parseval in $L_2(\mu\times\mu), L_2(\nu\times\nu), L_2(\mu\times\nu)$,
$$
\left\|R_\mathcal{X}f\right\|_{L_2(\mu\times\mu)}^2+\left\|R_\mathcal{Y}f\right\|_{L_2(\nu\times\nu)}^2=2\sum_{j=1}^\infty\left(\left|\langle f,\f_j\otimes \psi_0\rangle_{L_2(\mu\times \nu)}\right|^2+\left|\langle f,\f_0\otimes \psi_j\rangle_{L_2(\mu\times \nu)}\right|^2\right)\stackrel{\eqref{eq:parseval use fg}}{\le} 2\|f\|_{L_2(\mu\times \nu)}^2.
$$
This is precisely~\eqref{eq:scalar 1}.

Next, for every $\alpha,\beta\in \C$ define $S^\alpha_{\mathcal{X}}f\in L_2(\mu)$ and $S^\beta_{\mathcal{Y}}f\in L_2(\nu)$ by
$$
S^\alpha_\mathcal{X}f\eqdef (1-\alpha)\langle f,\f_0\otimes \psi_0\rangle_{L_2(\mu\times \nu)}\f_0+\sum_{j=1}^\infty \langle f,\f_j\otimes \psi_0\rangle_{L_2(\mu\times \nu)}\f_j,
$$
and
$$
S^\beta_\mathcal{Y}f\eqdef (1-\beta)\langle f,\f_0\otimes \psi_0\rangle_{L_2(\mu\times \nu)}\f_0+\sum_{j=1}^\infty \langle f,\f_0\otimes \psi_j\rangle_{L_2(\mu\times \nu)}\psi_j.
$$
In other words, we have the following identities $\mu$-almost surely and $\nu$-almost surely, respectively.
$$
S^\alpha_\mathcal{X}f(x)=\int_{\mathcal{Y}} f(x,y)\ud\nu(y)-\alpha\iint_{\mathcal{X}\times\mathcal{Y}}f(\chi,\upupsilon)\ud\mu(\chi)\ud\nu(\upupsilon),
$$
and
$$
S^\beta_\mathcal{Y}f(y)=\int_{\mathcal{X}} f(x,y)\ud\mu(x)-\beta\iint_{\mathcal{X}\times\mathcal{Y}}f(\chi,\upupsilon)\ud\mu(\chi)\ud\nu(\upupsilon),
$$
By Parseval in $L_2(\mu), L_2(\nu), L_2(\mu\times\nu)$,
\begin{align*}
\bigg\|S^\alpha_\mathcal{X}&f\bigg\|_{L_2(\mu)}^2+\left\|S^\beta_\mathcal{Y}f\right\|_{L_2(\nu)}^2\\&=\left(|1-\alpha|^2+|1-\beta|^2\right)\left|\langle f,\f_0\otimes \psi_0\rangle_{L_2(\mu\times \nu)}\right|^2
+\sum_{j=1}^\infty\left(\left|\langle f,\f_j\otimes \psi_0\rangle_{L_2(\mu\times \nu)}\right|^2+\left|\langle f,\f_0\otimes \psi_j\rangle_{L_2(\mu\times \nu)}\right|^2\right)\\&\stackrel{\eqref{eq:parseval use fg}}{\le} \max\left\{|1-\alpha|^2+|1-\beta|^2,1\right\}\|f\|_{L_2(\mu\times \nu)}^2.
\end{align*}
The case $\alpha=\beta=\frac12$ of this inequality is precisely~\eqref{eq:scalar 2}. It is worthwhile to note in passing that this reasoning (substituted into the above interpolation argument)  yields the following generalization of~\eqref{eq:two measures-2}.
\begin{align}\label{eq:two measures-2 alpha beta}
\begin{split}
\max\left\{\left(|1-\alpha|^2+|1-\beta|^2\right)^{p\theta},1\right\}&\left((1+|\alpha|)^{\frac{2p(1-\theta)}{2-\theta p}}+(1+|\beta|)^{\frac{2p(1-\theta)}{2-\theta p}}\right)^{1-\frac{\theta p}{2}}\iint_{\mathcal{X}\times \mathcal{Y}}\|f(x,y)\|_{\![F,H]_\theta}^p\ud\mu(x)\ud\nu(y)\\&\ge
\int_{\mathcal{X}}\bigg\|\int_\mathcal{Y} f(x,y)\ud\nu(y)-\alpha\iint_{\mathcal{X}\times\mathcal{Y}}f(\chi,\upupsilon)\ud\mu(\chi)\ud\nu(\upupsilon)\bigg\|_{\![F,H]_\theta}^p\ud\mu(x)
\\& \qquad+\int_{\mathcal{Y}}\bigg\|\int_\mathcal{X} f(x,y)\ud\mu(x)-\beta\iint_{\mathcal{X}\times\mathcal{Y}}f(\chi,\upupsilon)\ud\mu(\chi)\ud\nu(\upupsilon)\bigg\|_{\![F,H]_\theta}^p
\ud\nu(y).
\end{split}
\end{align}

For the justification of the remaining inequality~\eqref{eq:scalar 3}, define $Tg\in L_2(\mu)$ by
$$
Tg\eqdef \sum_{j=1}^\infty \left(\langle g,\f_0\otimes \f_j\rangle_{L_2(\mu\times \mu)}-\langle g,\f_j\otimes \f_0\rangle_{L_2(\mu\times \mu)}\right)\f_j.
$$
In other words, $\mu$-almost surely $Tg(\chi)=\int_{\mathcal{X}} \big(g(x,\chi)-g(\chi,x)\big)\ud\mu(x)$. By Parseval in $L_2(\mu),L_2(\mu\times\nu)$,
\begin{multline*}
\|Tg\|_{L_2(\mu)}^2=\sum_{j=1}^\infty \left|\langle g,\f_0\otimes \f_j\rangle_{L_2(\mu\times \mu)}-\langle g,\f_j\otimes \f_0\rangle_{L_2(\mu\times \mu)}\right|^2\\\le \sum_{j=1}^\infty 2\left(\left|\langle g,\f_0\otimes \f_j\rangle_{L_2(\mu\times \mu)}\right|^2+\left|\langle g,\f_j\otimes \f_0\rangle_{L_2(\mu\times \mu)}\right|^2\right)\stackrel{\eqref{eq:parseval use fg}}{\le}  2\|g\|_{L_2(\mu\times\mu)}^2,
\end{multline*}
where in the penultimate step we used the convexity of $(\zeta\in \C)\mapsto |\zeta|^2$. This is precisely~\eqref{eq:scalar 3}.
\end{proof}

We will next deduce Theorem~\ref{thm:Lp case} from the special case of Theorem~\ref{thm:interpolation} that we stated as Theorem~\ref{cor:differences}.

\begin{proof}[Proof of Theorem~\ref{thm:Lp case}] The largest $\theta\in [0,1]$  for which $\frac{2}{2-\theta}\le p\le \frac{2}{\theta}$ and also $\frac{1}{q}=\frac{1-\theta}{r}+\frac{\theta}{2}$ for some  $r\ge 1$ is
$$
\theta_{\max}=\theta_{\max}(p,q)\eqdef 2\min\left\{\frac{1}{p},1-\frac{1}{p},\frac{1}{q},1-\frac{1}{q}\right\}.
$$
We then have $L_q=[L_r,L_2]_{\theta_{\max}}$. Note that the quantity $c(p,q)$ that is defined in~\eqref{eq:df cpq} is equal to $\frac{p}{2}\theta_{\max}$.

By~\eqref{eq:rj interpolation} with $\theta=\theta_{\max}$ and $F=L_r$ we have $\boldsymbol{\mathcal{j}}_p(L_q)\ge 2^{c(p,q)}$. The matching upper bound $\boldsymbol{\mathcal{j}}_p(L_q)\le 2^{c(p,q)}$ holds due to  the following quick examples. If $X$ is uniformly distributed on $\{-1,1\}$, then $\E[|X-\E[X]\|^p]$ and $\E|X-X'|^p=2^{p-1}$. So, $\boldsymbol{\mathcal{j}}_p(\R)\le 2^{p-1}$. If $\e\in (0,1)$ and $\Pr[X_\e=0]=1-\e$ and $\Pr[X_\e=1]=\e$, then for  $p>1$, $$\boldsymbol{\mathcal{j}}_p(\R)\le \frac{\E\left[\|X_\e-X_\e'\|_q^p\right]}{\E\left[\|X_\e-\E[X_\e]\|_q^p\right]} = \frac{2\e(1-\e)}{(1-\e)\e^p+\e(1-\e)^p}\xrightarrow[\e\to 0^+]{}  2.$$
If $n\in \mathbb{N}$ and $X_n$ is uniformly distributed over  $\{\pm e_1,\ldots,\pm e_n\}$, where $\{e_j\}_{j=1}^\infty$ is the standard basis of $\ell_p$, then
$$
\boldsymbol{\mathcal{j}}_p(L_q)\le \boldsymbol{\mathcal{j}}_p(\ell_q^n)\le \frac{\E\left[\|X_n-X_n'\|_q^p\right]}{\E\left[\|X_n-\E[X_n]\|_q^p\right]}=\frac{n-1}{n}2^{\frac{p}{q}}+\frac{1}{2n}2^p\xrightarrow[n\to \infty]{} 2^{\frac{p}{q}}.
$$
If  $r_1,\ldots,r_n$ are i.i.d.~symmetric Bernoulli random variables viewed as elements of $L_q$, e.g.~they can be the coordinate functions in $L_q(\{-1,1\}^n)$, then let $R_n$ be uniformly distributed over $\{\pm r_1,\ldots,\pm r_n\}$. Then,
$$
\boldsymbol{\mathcal{j}}_p(L_q)\le  \frac{\E\left[\|R_n-R_n'\|_q^p\right]}{\E\left[\|R_n-\E[R_n]\|_q^p\right]}=\frac{n-1}{n}2^{\frac{p(q-1)}{q}}+\frac{1}{2n}2^p\xrightarrow[n\to \infty]{} 2^{\frac{p(q-1)}{q}}.
$$
This completes the proof that $\boldsymbol{\mathcal{j}}_p(L_q)=2^{c(p,q)}$.

Next, an application of~\eqref{eq:rj interpolation} with $\theta=\theta_{\max}$ and $F=L_r$ gives $\boldsymbol{\mathcal{r}}_p(L_q)\le 2^{1+(1-\theta_{\max})p}$. In other words,
\begin{equation}\label{eqînferior before nowflake}
\E\left[\|X-X'\|^p_{q}\right]+\E\left[\|Y-Y'\|^p_{q}\right]\le 2^{\max\left\{p-1,3-p,1+\frac{p(q-2)}{q},1+\frac{p(2-q)}{q}\right\}}\E\left[\|X-Y\|^p_{q}\right],
\end{equation}
for every $p$-integrable independent $L_q$-valued random variables $X,X',Y,Y'$ such that $(X,Y)$ and $(X',Y')$ are identically distributed.
The bound~\eqref{eqînferior before nowflake} coincides with~\eqref{eq:rj interpolation}, where $C(p,q)$ is as in~\eqref{eq:df Cpq}, only in the first two ranges that appear in~\eqref{eq:df Cpq}, namely when $\frac{p}{p-1}\le q\le p$ or when $\frac{q}{q-1}\le p\le q$. For the remaining ranges that appear in~\eqref{eq:df Cpq}, the bound~\eqref{eqînferior before nowflake} is inferior to~\eqref{eq:rj interpolation}, so we reason as follows.

For every $q,Q\in [1,\infty]$ satisfying $Q\ge q$, by~\cite[Remark~5.10]{MN04} (the case $Q\in [1,2]$  is an older result~\cite{BDK65}) there exists an embedding $\mathfrak{s}=\mathfrak{s}_{q,Q}:L_q\to L_Q$ (given by an explicit formula) such that
\begin{equation}\label{eq:snowflake quote}
\forall\, x,y\in L_q,\qquad \|\mathfrak{s}(x)-\mathfrak{s}(y)\|_Q=\|x-y\|_q^{\frac{q}{Q}}.
\end{equation}
Apply~\eqref{eqînferior before nowflake} to the $L_Q$-valued random vectors $\mathfrak{s}(X),  \mathfrak{s}(X'),\mathfrak{s}(Y),\mathfrak{s}(Y')$ with $q$ replaced by $Q$ and $p$ replaced with $\frac{pQ}{q}$. The resulting estimate is
\begin{align}\label{eq:change q Q}
\begin{split}
\E\left[\|X-X'\|^p_{q}\right]+\E\left[\|Y-Y'\|^p_{q}\right]&\stackrel{\eqref{eq:snowflake quote}}{=}
\E\left[\|\mathfrak{s}(X)-\mathfrak{s}(X')\|^{\frac{pQ}{q}}_{Q}\right]+\E\left[\|\mathfrak{s}(Y)-\mathfrak{s}(Y')\|^{\frac{pQ}{q}}_{Q}\right]\\
&\stackrel{\eqref{eqînferior before nowflake}}{\le} 2^{\max\left\{\frac{pQ}{q}-1,3-\frac{pQ}{q},1+\frac{p(Q-2)}{q},1+\frac{p(2-Q)}{q}\right\}}
\E\left[\|\mathfrak{s}(X)-\mathfrak{s}(Y)\|^{\frac{pQ}{q}}_{Q}\right]\\&\stackrel{\eqref{eq:snowflake quote}}{=} 2^{\max\left\{\frac{pQ}{q}-1,3-\frac{pQ}{q},1+\frac{p(Q-2)}{q},1+\frac{p(2-Q)}{q}\right\}}\E\left[\|X-Y\|^p_{q}\right].
\end{split}
\end{align}
It is in our interest to choose $Q\ge q$ so as to minimize the right hand side of~\eqref{eq:change q Q}. If $\frac{1}{p}+\frac{1}{q}\le 1$, then $Q=q$ is the optimal choice in~\eqref{eq:change q Q}, and therefore we return to~\eqref{eqînferior before nowflake}. But,  if $\frac{1}{p}+\frac{1}{q}\ge 1$, then $Q=1+\frac{q}{p}\ge q$ is the optimal choice in~\eqref{eq:change q Q} and we arrive at the following estimate which is better than~\eqref{eqînferior before nowflake} in the stated range
\begin{equation}\label{eq:1/p+1/q}
\frac{1}{p}+\frac{1}{q}\ge 1\implies \E\left[\|X-X'\|^p_{q}\right]+\E\left[\|Y-Y'\|^p_{q}\right]\le 2^{\max\left\{\frac{p}{q},2-\frac{p}{q}\right\}}\E\left[\|X-Y\|^p_{q}\right].
\end{equation}
The bound~\eqref{eq:1/p+1/q} covers the third and fourth ranges that appear in~\eqref{eq:df Cpq}, as well as the case $p=q\in [1,2]$ of the fifth range that appears in~\eqref{eq:df Cpq}. However, \eqref{eq:1/p+1/q} is inferior to~\eqref{eq:df Cpq} when $1\le p<q\le 2$. When this occurs, use the fact~\cite{Kad58} that $L_q$ is isometric to a subspace of $L_p$ and apply the already established case $p=q$ to the $L_p$-valued random variables $\mathcal{i}(X), \mathcal{i}(X'),\mathcal{i}(Y),\mathcal{i}(Y')$, where $\mathcal{i}:L_q\to L_p$ is any isometric embedding.

We will next  prove that $\boldsymbol{\mathcal{r}}_p(L_q)\ge 2^{C_{\mathrm{opt}}(p,q)}$, where $C_{\mathrm{opt}}(p,q)$ is given in~\eqref{eq:df cpq opt}. In particular, this will justify the second sharpness assertion of Theorem~\ref{thm:Lp case}, namely that~\eqref{eqînferior before nowflake} is sharp when $p,q$ belong to the first, second or fifth ranges that appear in~\eqref{eq:df Cpq}. Firstly, by considering the special case of~\eqref{eq:roundness} in which $X,Y$ are i.i.d., we see that $\boldsymbol{\mathcal{r}}_p(F)\ge 1$ for any Banach space $F$. Next, fix $n\in \mathbb{N}$ and let $r_1,\ldots,r_n,\rho_1,\ldots,\rho_n\in L_q$ be such that $r_1,\ldots,r_n$ and $\rho_1,\ldots,\rho_n$  each form a sequence of i.i.d.~symmetric Bernoulli random variables, and the supports of $r_1,\ldots,r_n$ are disjoint from the supports of $\rho_1,\ldots,\rho_n$. For example, one could consider them as the elements of $L_q(\{-1,1\}^{n})\oplus_q L_q(\{-1,1\}^{n})$ that are given by $r_i=(\omega\mapsto\omega_i,0)$ and $\rho_i=(0,\omega\mapsto\omega_i)$ for each $i\in \n$. Let $X$ be uniformly distributed over $\{r_1,\ldots,r_n\}$ and $Y$ be uniformly distributed over $\{\rho_1,\ldots,\rho_n\}$. Due to the disjointness of the supports, we have  $\|X-Y\|^p_q=(\|X\|_q^q+\|Y\|_q^q)^{p/q}=2^{p/q}$ point-wise. At the same time,  $\E[\|X-X'\|_q^p]+\E[\|Y-Y'\|_q^p]=2(1-1/n)(2^q/2)^{p/q}=(1-1/n)2^{1+p(q-1)/q}$. By letting $n\to\infty$, this shows that necessarily $\boldsymbol{\mathcal{r}}_p(L_q)\le 2^{1+p(q-2)/q}$. Finally, if~\eqref{eq:roundness} holds, then in particular it holds for scalar-valued random variables. By integrating,  we see that $\boldsymbol{\mathcal{r}}_p(F)\ge \boldsymbol{\mathcal{r}}_p(L_p)$ for any Banach space $F$. But, the case $p=q$ of the above discussion gives  $\boldsymbol{\mathcal{r}}_p(L_p)\ge 2^{1+p(p-2)/p}=2^{p-1}$, as required.

The bound~\eqref{eq:bm bound} of Theorem~\ref{thm:Lp case} coincides with~\eqref{eq:mb interpolation}. When $p \leq q \leq 2$, we have $C(p,q)=1$, $c(p,q)=p-1$ and thus $\boldsymbol{\mathcal{m}}_p(L_q) \leq 2^{2-p}$. It therefore remains to check that $\boldsymbol{\mathcal{b}}_p(L_q)\ge 2^{2-p}$ when $p\le q\le 2$. In fact,  $\boldsymbol{\mathcal{b}}_p(F)\ge 2^{2-p}$ for every $p\ge 1$ and every Banach space $(F,\|\cdot\|_F)$. Indeed, fix distinct $a,b\in F$. Let $X,Y$ be independent and uniformly distributed over $\{a,b\}$. Then
\begin{equation*}
\boldsymbol{\mathcal{b}}_p(F)\ge \frac{\E\left[\|X-z\|_{\!F}^p+\|Y-z\|_{\!F}^p\right]}{\E\left[\|X-Y\|_{\!F}^p\right]}=\frac{\|a-z\|_{\!F}^p+\|b-z\|_{\!F}^p}{\frac12\|a-b\|_{\!F}^p}\ge \frac{2\left\|\frac12(a-z)-\frac12(b-z)\right\|_{\!F}^p}{\frac12\|a-b\|_{\!F}^p}=2^{2-p},
\end{equation*}
where the penultimate step is an application of the convexity of $\|\cdot\|_F^p$.
\end{proof}
\begin{remark}\label{rem:schatten}
Fix $n\in \mathbb{N}$. Following~\cite{BRS17}, for $a=(a_1,\ldots,a_{2n})\in \C^{2n}$ denote by $\Re(a)=(\Re(a_1),\ldots,\Re(a_{2n}))\in \R^{2n}$ and $\Im(a)=(\Im(a_1),\ldots,\Im(a_{2n}))\in \R^{2n}$ the vectors of real parts and imaginary parts of the entries of $a$, respectively. Let $\Lambda(a)\in [0,\infty)$ be the area of the parallelogram that is generated by $\Re(a)$ and $\Im(a)$, i.e.,
$$
\Lambda(a)\eqdef \sqrt{\|\Re(a)\|_2^2\|\Im(a)\|_2^2-\left\langle\Re(a),\Im(a)\right\rangle }.
$$
By~\cite[Lemma~5.2]{BRS17} there is a linear operator $\mathcal{C}:\C^{2n}\to \mathsf{M}_{2^n}(\C)$ from $\C^{2n}$ to the space of $2^n$ by  $2^n$ complex matrices, such that for any $a\in \C^{2n}$ the Schatten-1 norm of the matrix $\mathcal{C}(a)$ satisfies
\begin{equation}\label{eq:Lambda formula}
\|\mathcal{C}(a)\|_{\mathsf{S_1}}=\frac12\sqrt{\|a\|_2^2+2\Lambda(a)}+\frac12\sqrt{\|a\|_2^2-2\Lambda(a)}.
\end{equation}
Let $e_1,\ldots,e_{2n}\in \C^{2n}$ be the standard  basis of $\C^{2n}$ and define $2n$ matrices $x_1,\ldots,x_n,y_1,\ldots,y_n\in \mathsf{M}_{2^n}(\C)$ by   $x_k=\mathcal{C}(e_k)$ and $y_k=\mathcal{C}(ie_{n+k})$ for  $k\in \n$. By~\eqref{eq:Lambda formula} we have $\|x_j-x_k\|_{\mathsf{S}_1}=\|y_j-y_k\|_{\mathsf{S}_1}=\sqrt{2}$ for distinct $j,k\in \n$, while $\|x_j-y_k\|_{\mathsf{S}_1}=1$ for all $j,k\in \n$. Hence, if we let $X$ and $Y$ be independent and distributed uniformly over $\{x_1,\ldots,x_n\}$ and $\{y_1,\ldots,y_n\}$, respectively, and $X',Y'$ are independent copies of $X,Y$, respectively, then for every $p\ge 1$ we have
$$\E\left[\|X-Y\|_{\mathsf{S}_1}^p\right]=1\qquad\mathrm{and}\qquad \E\left[\|X-X'
\|_{\mathsf{S}_1}^p\right]=\E\left[\|Y-Y'\|_{\mathsf{S}_1}^p\right]=\frac{n-1}{n}2^{\frac{p}{2}}.$$
By letting $n\to\infty$, this implies that $\boldsymbol{\mathcal{r}}_p(\mathsf{S}_1)\ge 2^{\frac{p}{2}+1}$. In particular, $\boldsymbol{\mathcal{r}}_1(\mathsf{S}_1)\ge 2\sqrt{2}$.
\end{remark}

\begin{remark} \label{infsetting} Fix $q\ge 1$.  Let $(F, \| \cdot\|_{\!F})$ be a Banach space.
Assume that $F$ has a linear subspace $G\subset F$ that is isometric to $L_q$ (or the Schatten--von Neumann trace class $\mathsf{S}_q$).
If $X,Y\in L_p(G)$ are i.i.d.~random variables taking values in  $G$, then for $c(p,q)$ as in~\eqref{eq:df cpq}, by Theorem~\ref{thm:Lp case} we have
\begin{equation}\label{eq:inf z}
\E\left[\|X-Y\|_{\!F}^p\right] \ge 2^{c(p,q)}\cdot \inf_{z \in F} \E \left[\|X-z\|_{\! F}^p\right].
\end{equation}
We note that this inequality is optimal despite the fact that the infimum is now taken over $z$ in the larger super-space $F$. Indeed, in the proof of Theorem~\ref{thm:Lp case} the random variables that established optimality of $c(p,q)$ were symmetric when $p,q$ belong to the first three ranges that appear in~\eqref{eq:df cpq}. In these cases, by the convexity of $\| \cdot\|_{\!F}^{p}$, the infimum in the right had side of~\eqref{eq:inf z} is attained at $z=0\in G$. The fact that the term $2^{c(p,q)}$ in the right hand side of~\eqref{eq:inf z} cannot be replaced by any value greater than $2$  needs the following separate treatment. If $\e\in (0,1)$ and  $\Pr[X=v]=\varepsilon=1-\Pr[X=0]$ for some $v \in G$ with $\|v\|_{\!F}=1$, then $\E\left[\|X-Y\|_{\!F}^{p}\right]=2\e(1-\e)$. Next, for any $z \in F$ we have
\begin{multline*}
\E\left[\|X-z\|_{\!F}^{p}\right]=(1-\e)\|z\|_{\!F}^p+\e\|v-z\|_{\!F}^p\ge (1-\e)\|z\|_{\!F}^p+\e \left(\max\left\{0,1-\|z\|_{\!F}^{\phantom{p}}\right\}\right)^p\\\ge \min_{r\ge 0} \left((1-\e)r^p+\e \left(\max\left\{0,1-r\right\}\right)^p\right)=\frac{\e(1-\e)}{\left(\e^{\frac{1}{p-1}}+(1-\e)^{\frac{1}{p-1}}\right)^{p-1}},
\end{multline*}
where the final step follows by elementary calculus. Therefore,
$$
\frac{\E\left[\|X-Y\|_{\!F}^p\right]}{  \inf_{z \in F} \E \left[\|X-z\|_{\! F}^p\right]}\le 2\left(\e^{\frac{1}{p-1}}+(1-\e)^{\frac{1}{p-1}}\right)^{p-1}\xrightarrow[\e\to 0^+]{}  2.
$$
\end{remark}

\begin{remark}
An extrapolation theorem of Pisier~\cite{Pis79} asserts that if $(F,\|\cdot\|_{\!F})$ is a Banach lattice that is both $p$-convex with constant $1$ and $q$-concave with constant $1$, where $\frac{1}{p}+\frac{1}{q}=1$, then there exists a Banach lattice $W$, a Hilbert space $H$, and $\theta\in (0,1]$ such that $F$ is isometric to the complex interpolation space $[W,H]_\theta$. Hence, Theorem~\ref{cor:differences} applies in this setting, implying in particular that there is $r\in [1,\infty)$, namely $r=\frac{2}{\theta}$, such that every i.i.d.~$F$-valued random variables $X,Y\in L_r(F)$ satisfy
$$
\E\left[\|X-Y\|_{\!F}^r\right]\ge 2\E\left[\big\|X-\E[X]\big\|_{\! F}^r\right].
$$
\end{remark}

We will  conclude by discussing further bounds  in the non-convex range $p<1$, as well as their limit when $p\to 0^+$. When $p\in (0,1)$, the topological vector space $L_p$ is not a normed space. Despite this, when we say that a normed space $(F,\|\cdot\|_{\!F})$ admits a linear isometric emebdding into $L_p$ we mean (as usual) that there exists a linear mapping $T:\to L_p$ such that $\|Tx\|_p=\|x\|_{\!F}$ for all $x\in F$. This of course forces the $L_p$ quasi-norm to induce a metric on the image of $T$, so the use of the term ``iso{\bf metric}'' is not out of place here, though note that it is inconsistent with the standard metric on $L_p$, which is given by $\|f-g\|_p^p$ for all $f,g\in L_p$. The following proposition treats the case $p\in (0,2]$, though later we will mainly be interested in the non-convex range $p\in (0,1)$. Note that the case $p=1$ implies the stated inequalities for, say, any two-dimensional normed space, since any such space admits~\cite{Bol69} an isometric embedding into $L_1$.

\begin{proposition} \label{embedding}
Let $(F, \| \cdot \|_{\! F})$ be a Banach space  that admits
an isometric linear embedding into $L_{p}$ for some $p \in (0,2]$. Let $X,X',Y,Y'\in L_p(F)$ be independent $F$-valued random vectors such that $X'$ has the same distribution as $X$ and $Y'$ has the same distribution as $Y$.
Then,
\begin{equation}\label{eq:nonconvex roundness}
\E\left[\left\|X-X'\right\|_{\!F}^p\right]+\E\left[\left\|Y-Y'\right\|_{\!F}^p\right]\le 2\E\left[\left\|X-Y\right\|_{\!F}^p\right],
\end{equation}
and
\begin{equation}\label{eq:nonconvex bary}
\inf_{z \in F} \E\left[\|X-z\|_{\!F}^{p}+\E\|Y-z\|_{\!F}^{p}\right]
\leq \min\left\{2,2^{2-p}\right\}  \E\left[\|X-Y\|_{\!F}^{p}\right].
\end{equation}
The constants $2$ and $\min\left\{2,2^{2-p}\right\}$ in~\eqref{eq:nonconvex roundness} and~\eqref{eq:nonconvex bary}, respectively, cannot be improved.
\end{proposition}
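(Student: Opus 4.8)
The plan is to treat the two displayed inequalities separately: \eqref{eq:nonconvex roundness} is the genuinely $L_p$-specific one, and I would reduce it to a scalar statement and then to a trivial inequality between characteristic functions, whereas \eqref{eq:nonconvex bary} I would deduce from \eqref{eq:nonconvex roundness} by soft arguments, supplemented by Theorem~\ref{thm:Lp case} in the range $p\ge 1$. First I would fix a linear isometric embedding $T\colon F\to L_p=L_p(\Omega,\mu)$ and write $\|x\|_{\!F}^p=\int_\Omega|Tx(\omega)|^p\,\ud\mu(\omega)$. Since the integrand is nonnegative, Tonelli's theorem gives $\E[\|U-V\|_{\!F}^p]=\int_\Omega\E[|TU(\omega)-TV(\omega)|^p]\,\ud\mu(\omega)$ for any $F$-valued $U,V$, and for each fixed $\omega$ the real random variables $TX(\omega),TX'(\omega),TY(\omega),TY'(\omega)$ are independent with $TX'(\omega)$ distributed like $TX(\omega)$ and $TY'(\omega)$ like $TY(\omega)$. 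So \eqref{eq:nonconvex roundness} follows by integrating over $\omega$ the scalar inequality $\E|\xi-\xi'|^p+\E|\eta-\eta'|^p\le 2\,\E|\xi-\eta|^p$ for independent real $\xi,\xi',\eta,\eta'$ with $\xi'\sim\xi$, $\eta'\sim\eta$ and finite $p$-th moments.

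For this scalar inequality, when $p\in(0,2)$ I would invoke the representation $|s|^p=c_p\int_0^\infty t^{-1-p}(1-\cos ts)\,\ud t$ with $c_p\in(0,\infty)$, interchange expectation and the $t$-integral by Tonelli, and observe that independence yields $\E[\cos t(\xi-\xi')]=|\f_\xi(t)|^2$, $\E[\cos t(\eta-\eta')]=|\f_\eta(t)|^2$ and $\E[\cos t(\xi-\eta)]=\real(\f_\xi(t)\overline{\f_\eta(t)})$, where $\f_\xi,\f_\eta$ are the characteristic functions. The inequality then reduces to the pointwise-in-$t$ bound $2\real(\f_\xi\overline{\f_\eta})\le|\f_\xi|^2+|\f_\eta|^2$, i.e.\ to $|\f_\xi-\f_\eta|^2\ge 0$; the endpoint $p=2$ follows by letting $p\to 2^-$. (This is just the assertion, mentioned in the introduction, that $L_p$ has generalized roundness $p$ for $p\in(0,2]$, since $(\R,|\cdot|^{p/2})$ embeds isometrically in Hilbert space in that range.)

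For \eqref{eq:nonconvex bary} I would argue in two steps. The bound with constant $2$ holds in any Banach space once \eqref{eq:nonconvex roundness} is known: letting $X',Y'$ be independent copies of $X,Y$, the infimum over $z$ is at most the value of $z\mapsto\E[\|X-z\|_{\!F}^p+\|Y-z\|_{\!F}^p]$ averaged over $z=X'$, and also at most its average over $z=Y'$; averaging these two bounds and applying \eqref{eq:nonconvex roundness} gives $\inf_z\E[\|X-z\|_{\!F}^p+\|Y-z\|_{\!F}^p]\le\E[\|X-Y\|_{\!F}^p]+\tfrac12(\E[\|X-X'\|_{\!F}^p]+\E[\|Y-Y'\|_{\!F}^p])\le 2\,\E[\|X-Y\|_{\!F}^p]$. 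When $p\in[1,2]$ the better constant $2^{2-p}$ comes from Theorem~\ref{thm:Lp case}, which gives $\boldsymbol{\mathcal{m}}_p(L_p)=2^{2-p}$: since the point $\tfrac12\E[X]+\tfrac12\E[Y]\in F$ is carried by $T$ to the analogous point for $TX,TY$, the modulus $\boldsymbol{\mathcal{m}}_p$ is monotone under linear isometric embeddings, so $\boldsymbol{\mathcal{b}}_p(F)\le\boldsymbol{\mathcal{m}}_p(F)\le\boldsymbol{\mathcal{m}}_p(L_p)=2^{2-p}$. As $\min\{2,2^{2-p}\}$ equals $2$ for $p\le 1$ and $2^{2-p}$ for $p\ge 1$, this yields \eqref{eq:nonconvex bary} for all $p\in(0,2]$.

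For optimality I would take $F=\R$ and let $X=Y$ be i.i.d.\ uniform on $\{0,1\}$, with $X',Y'$ further independent copies. Then $\E|X-X'|^p=\E|Y-Y'|^p=\E|X-Y|^p=\tfrac12$, so \eqref{eq:nonconvex roundness} is an equality, and for \eqref{eq:nonconvex bary} we have $\E|X-Y|^p=\tfrac12$ while $\inf_z\E[|X-z|^p+|Y-z|^p]=2\inf_{z\in\R}(\tfrac12|z|^p+\tfrac12|1-z|^p)$; the function $z\mapsto\tfrac12|z|^p+\tfrac12|1-z|^p$ is concave on $[0,1]$ (so its minimum over $\R$ is $\tfrac12$, attained at an endpoint) when $p\le 1$, and convex (so its minimum is $2^{-p}$, attained at $z=\tfrac12$) when $p\ge 1$, whence the ratio equals $2$ for $p\le 1$ and $2^{2-p}$ for $p\ge 1$. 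The only step with real content is the scalar generalized-roundness inequality for $p\in(0,2)$; the rest is the averaging trick, an elementary monotonicity remark feeding Theorem~\ref{thm:Lp case}, and a one-variable minimization, so I expect no serious obstacle beyond that first step.
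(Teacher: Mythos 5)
Your proposal is correct, and it differs from the paper's argument only in the treatment of~\eqref{eq:nonconvex roundness}. The paper invokes the Schoenberg/Bretagnolle--Dacunha-Castelle--Krivine fact that $F$ admits a $\tfrac{p}{2}$-snowflake embedding $\mathfrak{s}$ into $L_2$ with $\|\mathfrak{s}(x)-\mathfrak{s}(y)\|_2=\|x-y\|_F^{p/2}$, and then applies the trivial $p=q=2$ Hilbertian case of Theorem~\ref{thm:Lp case} to $\mathfrak{s}(X),\mathfrak{s}(Y)$. You instead reduce by Tonelli to the scalar case and prove the scalar inequality via the representation $|s|^p=c_p\int_0^\infty t^{-1-p}(1-\cos ts)\,\ud t$ (valid for $p\in(0,2)$) and characteristic functions, turning the claim into the pointwise-in-$t$ bound $|\f_\xi(t)-\f_\eta(t)|^2\ge 0$. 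Both routes are classical incarnations of the negative-type property of $L_p$ for $p\le 2$; yours is more self-contained and computational (it does not quote the Schoenberg/BDK embedding, at the mild cost of a limiting argument for $p=2$, though that case is also immediate directly since it reduces to $(\E\xi-\E\eta)^2\ge 0$), while the paper's is shorter given the embedding as a black box. For~\eqref{eq:nonconvex bary} your reasoning matches the paper's in substance: the constant $2$ comes from the same averaging over a random center (you average the evaluations at $z=X'$ and $z=Y'$, the paper averages over the mixture $Z'$, which is the same thing), and for the improved constant $2^{2-p}$ when $p\ge 1$ you pass through $\boldsymbol{\mathcal{m}}_p(F)\le\boldsymbol{\mathcal{m}}_p(L_p)=2^{2-p}$ via the (correct) observation that a linear isometric embedding carries $\tfrac12\E[X]+\tfrac12\E[Y]$ to the analogous point, whereas the paper obtains the same bound from $\boldsymbol{\mathcal{r}}_p(F)\le2$, $\boldsymbol{\mathcal{j}}_p(F)\ge2^{p-1}$, and~\eqref{eq:general moduli relations}; these are equivalent chains. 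Your sharpness examples coincide with the paper's (identically distributed $X,Y$ for~\eqref{eq:nonconvex roundness}; two-point uniform $X,Y$ for~\eqref{eq:nonconvex bary}, with the concave/convex split at $p=1$).
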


\begin{proof}
By~\cite{Sch38,BDK65} there is a mapping $\mathfrak{s}:F\to L_2$ such that $\|\mathfrak{s}(x)-\mathfrak{s}(y)\|_2=\|x-y\|_{\!F}^{\!\frac{p}{2}}$ for all $x,y\in F$.  By the (trivial) Hilbertian case $p=q=2$ of Theorem~\ref{thm:Lp case} applied to the $L_2$-valued random vectors $\mathfrak{s}(X), \mathfrak{s}(Y)$,
\begin{align*}
\E\left[\left\|X-X'\right\|_{\!F}^p\right]+\E\left[\left\|Y-Y'\right\|_{\!F}^p\right]&=
\E\left[\left\|\mathfrak{s}(X)-\mathfrak{s}(X')\right\|_{2}^2\right]+\E\left[\left\|\mathfrak{s}(Y)-\mathfrak{s}(Y')\right\|_{2}^2\right]\\&\le 2\E\left[\left\|\mathfrak{s}(X)-\mathfrak{s}(Y)\right\|_{2}^2\right]=2\E\left[\left\|X-Y\right\|_{\!F}^p\right].
\end{align*}
This substantiates~\eqref{eq:nonconvex roundness}. When $p<1$ we cannot proceed from here to prove~\eqref{eq:nonconvex bary} by considering the analogue of the mixture constant $\boldsymbol{\mathcal{m}}(\cdot)$, namely by bounding the left hand side of~\eqref{eq:def m} as we did in the Introduction, since the present $L_p$ integrability assumption on $X,Y$ does not imply that $\E[X]$ and $\E[Y]$ are well-defined elements of $F$. Instead, let $Z'$ be independent of $X,Y$ and distributed according to the mixture of the laws of $X$ and $Y$, as in~\eqref{eq:def mixture}. The point $z\in F$ will be chosen randomly according to $Z'$, i.e.,
\begin{align}\label{eq:pass from Z to b}
\begin{split}
\inf_{z\in F}\E\left[\left\|X-z\right\|_{\!F}^p+\left\|Y-z\right\|_{\!F}^p\right]&\le \E\left[\left\|X-Z'\right\|_{\!F}^p+\left\|Y-Z'\right\|_{\!F}^p\right]
\\&=\frac12\E\left[\left\|X-X'\right\|_{\!F}^p+\left\|Y-Y'\right\|_{\!F}^p\right]+\E\left[\left\|X-Y\right\|_{\!F}^p\right]\stackrel{\eqref{eq:nonconvex roundness}}{\le} 2\E\left[\left\|X-Y\right\|_{\!F}^p\right].
\end{split}
\end{align}
For $p\ge 1$ we have $\boldsymbol{\mathcal{r}}_p(F)\le 2$ by~\eqref{eq:nonconvex roundness}, and  $\boldsymbol{\mathcal{j}}_p(F)\ge \boldsymbol{\mathcal{j}}_p(L_p)=p-1$ by Theorem~\ref{thm:Lp case}, so $\boldsymbol{\mathcal{b}}_p(F)\le 2^{2-p}$, by~\eqref{eq:general moduli relations}.

The sharpness of~\eqref{eq:nonconvex roundness} is seen by taking $X$ and $Y$ to be identically distributed. When $p\ge 1$, we already saw in the proof of Theorem~\ref{thm:Lp case} that $\boldsymbol{\mathcal{b}}_p(F)\ge 2^{2-p}$ for any Banach space $F$; thus~\eqref{eq:nonconvex bary} is sharp in this range. The same reasoning as in the proof of Theorem~\ref{thm:Lp case}  shows that the factor $2$ in~\eqref{eq:nonconvex bary} cannot be improved in the non-convex range $p\in (0,1)$ as well. Indeed, fix $v$ with $\|v\|_{\!F}=1$ and let $X$ and $Y$ be uniformly distributed over $\{0,v\}$.  Then, $\E\left[\|X-z\|_{\!F}^{p}+\E\|Y-z\|_{\!F}^{p}\right]=\|z\|_{\!F}^p+\|v-z\|_{\!F}^p\ge (\|z\|_{\!F}+\|v-z\|_{\!F})^p\ge \|v\|_{\!F}^p=1$ for every $z\in F$, while $\E\left[\|X-Y\|_{\!F}^{p}\right]=\frac12 \|v\|_{\! F}^p=\frac12$.
\end{proof}

Proposition~\ref{0-mean} below is the  limit of Proposition~\ref{embedding}  as $p\to 0^+$. While it is possible to deduce it formally from  Proposition~\ref{embedding} by passing to the limit, a justification of this  fact is quite complicated due to the singularity of the logarithm at zero. We will instead proceed via a shorter alternative approach.

Following~\cite{KKYY07}, a real Banach space $(F,\|\cdot\|_{\!F})$ is said to admit a linear isometric embedding into $L_0$ if there exists a probability space $(\Omega,\mu)$ and a linear operator $T:F\to \mathsf{Meas}(\Omega,\mu)$, where $\mathsf{Meas}(\Omega,\mu)$ denotes the space of (equivalence classes of) real-valued $\mu$-measurable functions on $\Omega$, such that
\begin{equation}\label{eq:exponential}
\forall\, x\in F,\qquad \|x\|_{\!F}=e^{\int_{\Omega}\log |Tx|\ud\mu}.
\end{equation}
As shown in~\cite{KKYY07}, every three-dimensional real normed space admits a linear isometric embedding into $L_0$, so in particular the following proposition applies to any such space.

\begin{proposition} \label{0-mean}
Let $(F,\| \cdot\|_{\!F})$ be a real Banach space that
admits a linear isometric embedding into $L_{0}$. Let $X,X',Y,Y'$ be independent $F$-valued random vectors such that $X'$ has the same distribution as $X$ and $Y'$ has the same distribution as $Y$. Assume that $\E\left[\log(1+\|X\|_{\!F})\right]<\infty$ and $\E\left[\log(1+\|Y\|_{\!F})\right]<\infty$. Then,
\begin{equation}\label{eq:product roundness}
e^{\E\left[\log\left(\|X-X'\|_{\!F}\cdot \|Y-Y'\|_{\!F}\right)\right]}\le e^{2\E\left[\log\left(\|X-Y\|_{\!F}\right)\right]},
\end{equation}
and
\begin{equation}\label{eq:product bary}
\inf_{z\in F}e^{\E\left[\log \left(\|X-z\|_{\!F}\cdot\|Y-z\|_{\!F}\right)\right]}\le e^{2\E\left[\log\left(\|X-Y\|_{\!F}\right)\right]}.
\end{equation}
The multiplicative constant $1$ in both of these inequalities is optimal.
\end{proposition}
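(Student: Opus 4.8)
The plan is to carry out the entire argument inside the linear isometric embedding $T\colon F\to \mathsf{Meas}(\Omega,\mu)$ furnished by~\eqref{eq:exponential}, and thereby reduce~\eqref{eq:product roundness} and~\eqref{eq:product bary} to their one–dimensional scalar counterparts. Since $T$ is linear, $\log\|x-y\|_{\!F}=\int_\Omega\log|Tx-Ty|\ud\mu$ for all $x,y\in F$, so, writing $U_\omega=TX(\omega),\,U_\omega'=TX'(\omega),\,V_\omega=TY(\omega),\,V_\omega'=TY'(\omega)$, the exponent on the left of~\eqref{eq:product roundness} is $\int_\Omega\bigl(\E\log|U_\omega-U_\omega'|+\E\log|V_\omega-V_\omega'|\bigr)\ud\mu(\omega)$ and twice the exponent on the right is $2\int_\Omega\E\log|U_\omega-V_\omega|\ud\mu(\omega)$, once one is allowed to interchange $\E$ with $\int_\Omega$. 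That interchange is legitimate: the elementary bound $\log^+\|x-y\|_{\!F}\le\log 2+\log(1+\|x\|_{\!F})+\log(1+\|y\|_{\!F})$ together with the hypotheses $\E\log(1+\|X\|_{\!F}),\E\log(1+\|Y\|_{\!F})<\infty$ makes all the positive parts integrable (so Fubini applies to them) while the negative parts are nonnegative and are handled by Tonelli; in particular every quantity in the statement is well defined in $[-\infty,\infty)$. It thus suffices to prove, for independent real random variables $U,U',V,V'$ with $U'\sim U$ and $V'\sim V$, the scalar inequality
\begin{equation}\label{eq:scalar-log-roundness}
\E\bigl[\log|U-U'|\bigr]+\E\bigl[\log|V-V'|\bigr]\le 2\,\E\bigl[\log|U-V|\bigr].
\end{equation}

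Inequality~\eqref{eq:scalar-log-roundness} is exactly the classical fact that the logarithmic energy $I(\lambda)\eqdef\iint\log\tfrac1{|s-t|}\,\ud\lambda(s)\ud\lambda(t)$ of a finitely supported signed measure $\lambda$ on $\R$ with $\lambda(\R)=0$ is nonnegative: applying $I(\lambda)\ge 0$ to $\lambda=\mathrm{law}(U)-\mathrm{law}(V)$ yields~\eqref{eq:scalar-log-roundness} after a routine passage from general laws to finitely supported ones. To see that $I(\lambda)\ge0$, use the Frullani identity $\log\tfrac1{|t|}=\int_0^\infty\frac{e^{-u|t|}-e^{-u}}{u}\,\ud u$; since $\lambda$ has total mass zero the $e^{-u}$ term drops and $I(\lambda)=\int_0^\infty\frac1u\bigl(\iint e^{-u|s-t|}\ud\lambda(s)\ud\lambda(t)\bigr)\ud u$, and each inner integral is nonnegative because $(s,t)\mapsto e^{-u|s-t|}$ is a positive definite kernel on $\R$ — it is the characteristic function of a Cauchy law, whence $\iint e^{-u|s-t|}\ud\lambda(s)\ud\lambda(t)=\int_\R\frac{u}{\pi(u^2+\xi^2)}\bigl|\int e^{\i\xi s}\ud\lambda(s)\bigr|^2\ud\xi\ge 0$. (Alternatively, $-\log|s-t|=\lim_{\alpha\to0^+}\frac{1-|s-t|^\alpha}{\alpha}$ is a pointwise limit of conditionally positive definite kernels, using that $|s-t|^\alpha$ is conditionally negative definite on $\R$ for every $\alpha\in(0,2]$ and that adding a constant has no effect against mass-zero measures.) Either way, \eqref{eq:scalar-log-roundness} holds, and undoing the reduction of the previous paragraph proves~\eqref{eq:product roundness}.

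For~\eqref{eq:product bary} I would simply repeat the device of~\eqref{eq:pass from Z to b} from the proof of Proposition~\ref{embedding}. Let $Z'$ be independent of $X,X',Y,Y'$ and distributed according to the mixture~\eqref{eq:def mixture} of the laws of $X$ and $Y$; then $\E\log(1+\|Z'\|_{\!F})<\infty$, so all quantities below are well defined, and choosing the random point $z=Z'$ gives
\begin{equation*}
\inf_{z\in F}\E\bigl[\log(\|X-z\|_{\!F}\|Y-z\|_{\!F})\bigr]\le\tfrac12\E\bigl[\log(\|X-X'\|_{\!F}\|Y-Y'\|_{\!F})\bigr]+\E\bigl[\log\|X-Y\|_{\!F}\bigr]\le 2\,\E\bigl[\log\|X-Y\|_{\!F}\bigr],
\end{equation*}
where the first inequality decomposes $Z'$ into an independent copy of $X$ with probability $\tfrac12$ and an independent copy of $Y$ with probability $\tfrac12$ (and uses that $\|X-Y\|_{\!F}$ is distributed both as the distance from $X$ to an independent copy of $Y$ and as the distance from $Y$ to an independent copy of $X$), while the second inequality is~\eqref{eq:product roundness}. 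Exponentiating gives~\eqref{eq:product bary}.

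Finally, optimality. The inequality~\eqref{eq:product roundness} is an equality whenever $X$ and $Y$ are identically distributed, since then both sides equal $e^{2\E\log\|X-X'\|_{\!F}}$; hence the constant $1$ there cannot be lowered. For~\eqref{eq:product bary}, take $F=\R$ (which embeds linearly isometrically into $L_0$ via $t\mapsto t\1$) and let $X=Y$ be independent with the arcsine law $\ud t/(\pi\sqrt{1-t^2})$ on $[-1,1]$ — the logarithmic equilibrium measure of $[-1,1]$, whose capacity is $\tfrac12$. Its logarithmic potential $z\mapsto\E\log|X-z|$ equals $-\log 2$ for every $z\in[-1,1]$ and is strictly larger for $z\notin[-1,1]$, so $\inf_{z\in\R}\E\log|X-z|=-\log 2=\E\log|X-X'|$, i.e.~\eqref{eq:product bary} holds with equality. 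The step that genuinely needs to be secured is the scalar inequality~\eqref{eq:scalar-log-roundness} — equivalently, positivity of logarithmic energy — together with the bookkeeping around the singularity of $\log$ at $0$ that licenses the reduction (and the accompanying edge cases where some of the expectations equal $-\infty$); the rest is a transcription of arguments already in the paper.
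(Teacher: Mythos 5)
Your proof is correct, and its overall architecture matches the paper's: the reduction to real-valued random variables via the $L_0$ representation~\eqref{eq:exponential} and Fubini/Tonelli, the passage from~\eqref{eq:product roundness} to~\eqref{eq:product bary} by taking the barycenter $z=Z'$ distributed as the mixture~\eqref{eq:def mixture}, and even the optimality example for~\eqref{eq:product bary} is the same one in disguise --- the arcsine law on $[-1,1]$ is exactly the distribution of $\cos\Theta$ for $\Theta$ uniform on $[0,2\pi]$, so you have merely rephrased the paper's trigonometric computation in equilibrium-measure language. Where you genuinely diverge is in the proof of the scalar inequality $\E\log|U-U'|+\E\log|V-V'|\le 2\,\E\log|U-V|$. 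The paper introduces the i.i.d.\ mixture pair $Z,Z'$, uses the identity $\E\log W=\int_0^\infty\frac{e^{-s}-\E[e^{-sW}]}{s}\,\ud s$ with $W\in\{(X-Y)^2,(Z-Z')^2\}$, and proves $\E[e^{-s(Z-Z')^2}]\ge\E[e^{-s(X-Y)^2}]$ for each $s\ge 0$ by expanding both sides through the Fourier transform of the Gaussian. You instead cast the inequality as nonnegativity of the logarithmic energy $I(\lambda)$ of the mass-zero signed measure $\lambda=\mathrm{law}(U)-\mathrm{law}(V)$, invoke Frullani on $\log\tfrac{1}{|t|}$, and conclude from the positive definiteness of $e^{-u|s-t|}$ via the Cauchy density. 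Unwinding the paper's argument shows it is the same ``Frullani plus positive-definite kernel'' mechanism with the Gaussian kernel in place of the Cauchy kernel and with $Z$ doing the job of $\lambda$; your version is marginally more direct and makes the potential-theoretic content explicit, while the paper's stays elementary and self-contained. Two small points worth tightening: the Frullani/Fourier computation already works for general laws under the integrability hypothesis you set up, so the detour through ``finitely supported $\lambda$, then a routine passage'' should simply be deleted rather than asserted; and for sharpness of~\eqref{eq:product roundness} you should take $X,Y$ i.i.d.\ \emph{and non-atomic} (as the paper does), since otherwise both sides may degenerate to $0$ and the equality exhibits nothing about the constant.
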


\begin{proof} \eqref{eq:product bary} is a consequence of~\eqref{eq:product roundness}  by reasoning analogously to~\eqref{eq:pass from Z to b}.  Due to the assumed representation~\eqref{eq:exponential}, by Fubini's theorem  it suffices to prove~\eqref{eq:product roundness} for real-valued random variables.

So, suppose that $X,Y$ are independent real-valued random variables such that $\E\left[\log(1+|X|)\right]<\infty$ and $\E\left[\log(1+|Y|)\right]<\infty$. Note that  every nonnegative random variable $W$ with $\E\left[\log(1+W)\right]<\infty$  satisfies
\begin{equation}\label{eq:integral laplace}
\E\left[\log W\right] =\int_0^\infty \frac{e^{-s}-\E\left[e^{-sW}\right]}{s}\ud s.
\end{equation}
Indeed, for every $a,b\in [0,\infty)$ with $a\le b$ we have
$$
\int_0^\infty \frac{e^{-as}-e^{-bs}}{s}\ud s=\int_{0}^{\infty} \left( \int_{a}^{b} se^{-ts}\,\ud t \right)\frac{\ud s}{s}=
\int_{a}^{b} \left( \int_{0}^{\infty} e^{-ts}\,\ud s\right)\,\ud t=\int_{a}^{b} \frac{\ud t}{t}=\log b-\log a,
$$
so that~\eqref{eq:integral laplace} follows by applying this identity and  the Fubini theorem separately on each of the events
$\{ W \geq 1\}$ and $\{ W<1\}$, taking advantage of the fact that $e^{-s}-e^{-sW}$ is of constant sign on both events.

Let $Z,Z'$ be independent random variables whose law is the mixture of the laws of $X,Y$ as in~\eqref{eq:def mixture}. the desired inequality~\eqref{eq:product roundness} is equivalent to the assertion that $\E\left[\log (Z-Z')^2\right]\le \E\left[\log (X-Y)^2\right]$. By two applications of~\eqref{eq:integral laplace}, once with $W=(X-Y)^2$ and once with $W=(Z-Z')^2$, it suffices to prove that
$$
\forall\, s\ge 0,\qquad \E\left[e^{-s(Z-Z')^{2}}\right] \geq \E\left[e^{-s(X-Y)^{2}}\right].
$$
This is so because, using the formula for the Fourier transform of the Gaussian density, we have
\begin{align}
\nonumber \E\left[e^{-s(Z-Z')^{2}}\right] &=\E\left[\frac{1}{\sqrt{2\pi}}\int_{-\infty}^\infty e^{it(Z-Z')\sqrt{2s}-\frac{t^2}{2}} \ud t\right]\\&=\frac{1}{\sqrt{2\pi}}\int_{-\infty}^\infty \E\left[e^{it\sqrt{2s} Z}\right]\cdot \E\left[e^{-it\sqrt{2s} Z'}\right] e^{-\frac{t^2}{2}} \ud t\nonumber \\&=\frac{1}{\sqrt{2\pi}}\int_{-\infty}^\infty \left|\E\left[e^{it\sqrt{2s} Z}\right]\right|^2e^{-\frac{t^2}{2}} \ud t\label{eq:use ZZ' ind}\\
&= \frac{1}{\sqrt{2\pi}}\int_{-\infty}^\infty \left|\frac12\E\left[e^{it\sqrt{2s} X}\right]+\frac12 \E\left[e^{it\sqrt{2s} Y}\right]\right|^2e^{-\frac{t^2}{2}} \ud t \nonumber\\
&\ge \frac{1}{\sqrt{2\pi}}\int_{-\infty}^\infty \Re\left(\E\left[e^{it\sqrt{2s} X}\right]\cdot \overline{\E\left[e^{it\sqrt{2s} Y}\right]}\right)e^{-\frac{t^2}{2}} \ud t \label{eq:amgm CIOMPLEX}\\
&= \frac{1}{\sqrt{2\pi}}\Re\left(\int_{-\infty}^\infty \E\left[e^{it\sqrt{2s} (X-Y)}\right]e^{-\frac{t^2}{2}} \ud t\right)=\E\left[e^{-s(X-Y)^{2}}\right]\label{eq:ind XY},
\end{align}
where~\eqref{eq:use ZZ' ind} uses Fubini and the independence of $Z$ and $Z'$,  \eqref{eq:amgm CIOMPLEX} uses the fact that for all $a,b\in \C$ we have $|(a+b)/2|^2=|(a-b)/2|^2+\Re(a\overline{b})\ge \Re(a\overline{b})$, the first step of~\eqref{eq:ind XY} uses the independence of $X$ and $Y$, and the last step of~\eqref{eq:ind XY} uses once more the formula for the Fourier transform of the Gaussian density.

The fact that~\eqref{eq:product roundness} is sharp follows by considering the case when $X,Y$ are i.i.d.~and non-atomic. Note that when both $X$ and $Y$ have an atom at the same point, both sides of~\eqref{eq:product bary} equal $0$. The example considered in the proof of Proposition~\ref{embedding} when $p>0$ is therefore of no use for establishing the optimality of~\eqref{eq:product bary}, due to the atomic nature of the distributions under consideration.
Instead, for an arbitrary $v \in F$ such that $\| v\|_{\!F}=1$, let us consider random vectors $X=(\cos \Theta) v$ and $Y=(\cos \Theta') v$, where $\Theta$ and $\Theta'$ are independent random variables uniformly distributed on $[0,2\pi]$.

Observe that for every $\alpha\in \R$ we have
\begin{align}\label{eq:Theta alpha}
\begin{split}
\E\left[\log \left|\cos\Theta-\cos\alpha\right|\right]&=\E\left[\log \left|2\sin\left(\frac{\Theta+\alpha}{2}\right)\sin\left(\frac{\Theta-\alpha}{2}\right)\right|\right]\\&=
\log 2+\E\left[\log \left|\sin\left(\frac{\Theta+\alpha}{2}\right)\right|\right]+\E\left[\log\left|\sin\left(\frac{\Theta-\alpha}{2}\right)\right|\right]=\log 2+2\E\left[\log \left|\cos\Theta\right|\right],
\end{split}
\end{align}
where the last step of~\eqref{eq:Theta alpha} holds because, by periodicity, $\left|\sin\left(\frac{\Theta\pm \alpha}{2}\right)\right|$ has the same distribution as $\left|\cos\Theta\right|$.

The case $\alpha=\frac{\pi}{2}$ of~\eqref{eq:Theta alpha} simplifies to give $\E\left[\log \left|\cos\Theta\right|\right]=-\log 2$. Hence, \eqref{eq:Theta alpha} becomes
\begin{equation}\label{eq:-log2}
\forall\, \alpha\in \R,\qquad \E\left[\log \left|\cos\Theta-\cos\alpha\right|\right]=-\log 2.
\end{equation}
Consequently,
\begin{equation}\label{eq: at leas -log 2}
\forall\, t\in \R,\qquad \E\left[\log \left|\cos\Theta-t\right|\right]\ge-\log 2.
\end{equation}
Indeed, if $t\in [-1,1]$, then one can write $t=\cos\alpha$ for some $\alpha\in \R$, so that by~\eqref{eq:-log2} the inequality in~\eqref{eq: at leas -log 2} holds as equality. If $|t|>1$, then $\left|\cos\theta-t\right|\ge \left|\cos\theta-\mathrm{sign}(t)\right|$ for all $\theta\in [0,2\pi]$, thus implying~\eqref{eq: at leas -log 2}. It also follows from~\eqref{eq:-log2} that
$$
\E\left[\log\left(\|X-Y\|_{\!F}\right)\right]=\E\left[\log \left|\cos\Theta-\cos\Theta'\right|\right]\stackrel{\eqref{eq:-log2}}{=}-\log 2.
$$
Next, by the Hahn--Banach theorem, take $\varphi \in F^{*}$ such that $\| \varphi\|_{F^{*}}=1$ and $\varphi(v)=\| v\|_{\! F}=1$. For any $z \in F$,
$$
\E\left[\log \left(\|X-z\|_{\!F}\right)\right] =\E\left[\log\left(\|Y-z\|_{\!F}\right)\right]\ge \E\left[\log \left|\varphi((\cos\Theta)v-z)\right|\right]=\E\left[\log \left|\cos\Theta-\varphi(z)\right|\right]\stackrel{\eqref{eq: at leas -log 2}}\ge -\log 2.
$$

This implies the asserted sharpness of~\eqref{eq:product bary}. Note that the above argument that~\eqref{eq:product bary} cannot hold with a multiplicative constant less than $1$ in the right hand side worked for any  Banach space $F$ whatsoever.
\end{proof}

\subsection*{Acknowledgements} We are grateful to Oded Regev for pointing us to~\cite[Lemma~5.2]{BRS17} and for significantly simplifying our initial reasoning  for the statement that is proved in Remark~\ref{rem:schatten}.
\bibliographystyle{abbrv}
\bibliography{no}

\bigskip

\noindent
{\sc Assaf Naor}\\
Department of Mathematics, Princeton University, Fine Hall Washington Road, Princeton NJ, USA;\\
e-mail: naor@math.princeton.edu

\bigskip
\noindent
{\sc Krzysztof Oleszkiewicz}\\
Institute of Mathematics, University of Warsaw, ul. Banacha 2, Warszawa, Poland;\\
Institute of Mathematics, Polish Academy of Sciences, ul. \'Sniadeckich 8, Warszawa, Poland;\\
e-mail: koles@mimuw.edu.pl

\end{document}